\newtheorem{example}{Example}
\newtheorem{theorem}{Theorem}
\newtheorem{lemma}{Lemma}
\newtheorem{remark}{Remark}
\newtheorem{conjecture}{Conjecture}
\newtheorem{proof}{Proof}
\def\R{\mathbb{R}}
\def\t{\tau}
\def\wto{\rightharpoonup}
\title{\LARGE \bf Extremum Seeking for Stabilization of Systems Not Affine in Control}
\author{Alexander~Scheinker and David~Scheinker% 
\thanks{{Alexander~Scheinker is a staff member at Los Alamos National Laboratory with the RF Control Group, Los Alamos National Laboratory, Los Alamos, NM 87545, USA, {\tt\small ascheink@lanl.gov}.}}%
\thanks{{David Scheinker is a lecturer at The Stanford University Department of Management Science and Engineering, Stanford, CA 94305, USA, {\tt\small dscheink@stanford.edu}.}}%
\thanks{{This research was supported by Los Alamos National Laboratory.}}%
}
\begin{document}

\maketitle
\thispagestyle{empty}

%%%%%%%%%%%%%%%%%%%%%%%%%%%%%%%%%%%%%%%%%%%%%%%%%%%%%%%%%%%%%%%%%%%%%%%%%%%%%%%%%%%%%%

%%%%%%%%%%%%%%%%%%%%%%%%%%%%%%%%%%%%%%%%%%%%%%%%%%%%%%%%%%%%%%%%%%%%%%%%%%%%%%%%%%%%%%

\bigskip
%\doublespacing
\begin{abstract}
In \cite{ref-Sch-Krstic-TAC} a form of extremum seeking for control (ESC) was developed for the stabilization of uncertain nonlinear systems. In ESC the extremum seeker controls the systems through feedback rather than fine tuning a controller. 
The ESC results, and other related results, apply only to systems affine in control. 
However, in most physical systems the control effort enters the system's dynamics through a nonlinear function, such as an input with deadzone and saturation. 
In this work, we utilize our previous results on ESC to develop stabilizing controllers for systems of practical interest that are non-affine in control. 
\end{abstract}

%\doublespacing
%%%%%%%%%%%%%%%%%%%%%%%%%%%%%%%%%%%%%%%%%%%%%%%%%%%%%%%%%%%%%%%%%%%%%%%%%%%%%%%%

\section{Introduction}

%We establish a constructive procedure to reduce the problem of controlling a more general system to 
%the problem of controlling a simpler system. This procedure allows non-continuous and non-periodic controllers and applies to a broad range of highly oscillatory systems with %multiple time scales including recently developed forms of 
%extremum seeking (ES) for stabilization.

\paragraph*{Motivation} 
Extremum seeking optimization (ES) has been a model-independent approach to optimization since the 1920s \cite{ref-lebES}. Studies in the 50s \cite{ref-ESOst}-\cite{ref-ESVol} paved the way for stability results \cite{ref-krstic,ref-tan06}. Recent developments include the application of ES to systems with uncertainties \cite{ref-guay1}, the control of a tunable thermoacoustic cooler \cite{ref-ES-Paek}, a non-gradient approach to global ES \cite{ref-ES-Manzie}, stabilization of a nonlinear system with parametric uncertainties when a system model is available \cite{ref-guay1}, power optimization of photovoltaic micro-converters \cite{ref-Krstic-power}, stochastic ES \cite{ref-Krstic-stoch}, discrete-time systems \cite{ref-guay2}, and has been applied to moderately unstable systems \cite{ref-Krstic-unst}. Recently, Lie bracket-based averaging results of Kurzweil, Jarnik, Sussmann, and Liu \cite{ref-kurz-jar-1,ref-suss-liu}, \cite{ref-mor-aeyel-00} have been applied to ES \cite{ref-durr-stan-john-11}. A broad review of developments is summarized in \cite{ref-ES-REV}.

In \cite{ref-Sch-Krstic-TAC} Extremum Seeking for Control (ESC) was introduced, to stabilize unknown, open-loop unstable systems by using the extremum seeker itself as a high frequency feedback control, an approach related to vibrational control \cite{ref-kapitza,ref-Meerk}. 
ESC has been applied for the optimization of high voltage converter modulators \cite{ref-HVCM}, inverted pendulum stabilization \cite{ref-pend}, has been studied on manifolds \cite{ref-manifold}. Non-smooth ESC, in which oscillation of the control effort disappears as equilibrium is approached, has been developed \cite{ref-Sch-Krstic-nonC2}. 

In \cite{ref-Sch-NewES}, a new, bounded form of ESC was developed in which the control efforts and parameter update rates have analytically known bounds. The bounded ESC approach is especially useful for digital implementation and for extremely noisy systems, was recently demonstrated in hardware \cite{ref-Sch-NIM}, and was recently extended from smoothly varying sinusoidal-like functions to a much larger class of functions \cite{ref-Sch-Sch}. 

All of the methods of ESC for stabilization described above assume the unknown systems are affine in control, 
of the form:
\begin{equation}
	\dot{x} = f(x,t) + g(x,t)u.
\end{equation}
However, in most physical systems the control effort enters the system's dynamics through a nonlinear function, 
such as an input with deadzone and saturation, see for example \cite{ref-nonaff0} for an overview and control approaches. 
Thus, it is a major limitation that ESC applies only to systems affine in control.

\paragraph*{Results of the paper}
In this paper we study ESC for vector-valued systems not affine in control, of the form:
\begin{flalign}
	& \dot{x} = f(x,t) + g(x,t,u), \ \ g(x,t,u) = \sum_{i=0}^{m}g_i(x,t)u^{2i+1}, \label{odd} \\
	& y = \psi(x,t),
\end{flalign}
where $g$ is a control non-linearity given as an odd polynomial in $u$. 
In the case of full state measurements, $y=x$, Theorem \ref{thm1} reduces controlling system (\ref{odd}) to 
the significantly easier problem of controlling the averaged system
\begin{equation}
	\dot{\bar{x}} = f(\bar{x},t) -k\alpha K_{g}g_m(\bar{x},t)g^T_m(\bar{x},t)\left ( \nabla_x V \right )^T, \label{AvgSys}
\end{equation}
where $\nabla_x V(x,t) = \left  ( \frac{\partial V}{\partial x_1}, \dots, \frac{\partial V}{\partial x_n} \right )$ and $K_g$ is a constant which depends on $g$. If there exist $k$, $\alpha$, and $V$ that stabilize system (\ref{AvgSys}), then there exists $\omega^\star$ such that for all $\omega>\omega^\star$ the following will stabilize system (\ref{odd})
\begin{equation}
	u(x,t) = \left(\alpha\omega\right)^{\frac{1}{2(2m+1)}}\cos \left ( \omega t + k V(x,t) \right ).
\end{equation} 
%In section \ref{sec:ExNonAffine}, we give sufficient conditions for Theorem \ref{thm1} to be applicable to a broader range of systems which are odd functions of $u$. In Section \ref{sec:even}, numerical experiments show that this approach is robust to un-modeled odd nonlinearities, errors in approximation,  
%and perturbations with even exponents such as $\epsilon u^{2n}$, for $|\epsilon|\ll 1$.

\paragraph{Application to classical ES problems}

If, as in standard extremum seeking, the goal is optimization of a system with analytically unknown output function, $y = \psi(x,t)$, we utilize the controller
\begin{equation}
	u(x,t) = \left ( \alpha\omega \right )^{\frac{1}{2(2m+1)}}\cos(\omega t + ky),
\end{equation}
which results in the average system
\begin{equation}
	\dot{\bar{x}} = f(\bar{x},t) - k\alpha K_g g_m(\bar{x},t)g^T(\bar{x},t)\left ( \nabla_{\bar{x}} \psi(\bar{x},t) \right )^T,
\end{equation}
which performs a gradient descent of the unknown function $\psi$ when $k\alpha>0$ is chosen sufficiently large.

%
%%
%%%
%%%%
%%%%%
%%%%%%
%%%%%%%
%%%%%%%%%%%%%%%%%%%%%%%%%%%%%%%%%%%%%%%%%%%%%%%%%%%%%%%%%%%%%%%%%%%%%%%%%%%%%%%%%%%%%%%%
%%%%%%%%%%%%%%%%%%%%%%%%%%%%%%%%%%%%%%%%%%%%%%%%%%%%%%%%%%%%%%%%%%%%%%%%%%%%%%%%%%%%%%%%

\section{Background}\label{sec:Back}

%%%%%%%%%%%%%%%%%%%%%%%%%%%%%%%%%%%%%%%%%%%%%%%%%%%%%%%%%%%%%%%%%%%%%%%%%%%%%%%%%%%%%%%%
%%%%%%%%%%%%%%%%%%%%%%%%%%%%%%%%%%%%%%%%%%%%%%%%%%%%%%%%%%%%%%%%%%%%%%%%%%%%%%%%%%%%%%%%
%%%%%%%
%%%%%%
%%%%%
%%%%
%%%
%%
%

In this section we introduce the notions of GUAS and SPUAS stability \cite{ref-mor-aeyel-00} and Theorem \ref{thm:Sch2} from \cite{ref-Sch-Sch}, which we use to prove our main result. 
Consider two systems, $\dot{x}=f(t,x)$ and $\dot{\tilde{x}}=f^\epsilon(t,x)$, and their trajectories which pass through the point $x_0$ at time $t_0$, $\psi(t,t_0,x_0)$ and $\psi^\epsilon(t,t_0,x_0)$. 
The results of \cite{ref-mor-aeyel-00} show that if on a given compact set $K$, for any given $\delta>0$, and any $T>0$, there exists $\epsilon^\star$ such that for all $\epsilon < \epsilon^\star$, $\sup_{t\in[t_0,t_0+T]}\left | \psi(t,t_0,x_0) - \psi^\epsilon(t,t_0,x_0) \right | < \delta$, then if the origin is a globally uniformly asymptotically stable (GUAS) equilibrium point of $\dot{x}=f(t,x)$, then the origin is also an $\epsilon$ - semi globally practically uniformly asymptotically stable (SPUAS) equilibrium point of $\dot{\tilde{x}}=f^\epsilon(t,x)$. 

The two central ingredients of Theorem \ref{thm:Sch2} are the Arzel$\mathrm{\grave{a}}$-Ascoli Theorem 
and the notion of weak convergence (see, e.g. \cite{Rudin}). 
A sequence of functions $\{f_i\} \subset L^2[0,1]$ is said to converge weakly to $f$ in $L^2[0,1]$, denoted $f_i\wto f$, if 
\begin{equation} 
	\lim_{i \to \infty} \int_0^1 f_i(\t) g(\t) d\t = \int_0^1 f(\t) g(\t) d\t, \quad \forall g \in L^2[0,1]. \nonumber
\end{equation}
For example, when $\omega_1 \neq \omega_2$, Basic Hilbert Space theory gives, for $\omega_i \rightarrow \infty$: $\cos\left (\omega_i  t \right )\sin\left (\omega_j  t \right )  \wto  0$, $\cos^2\left (\omega_i  t \right ), \sin^2\left (\omega_i  t \right ) \wto  \frac{1}{2}$, $\cos\left (\omega_1  t \right )\cos\left (\omega_2  t \right )  \wto  0$, $\sin \left (\omega_1  t \right ) \sin\left (\omega_2  t \right ) \wto  0$.

In what follows, we use the notation $u(y,t)=u(\psi(x,t),t)$ to emphasize that the controller $u$ is a function of $t$ and of, a potentially unknown, function $\psi(x,t)$, i.e. that $u(y,t)$ need not have direct access to $x$. 
\begin{theorem}\label{thm:Sch2} \cite{ref-Sch-Sch}
Consider the vector-valued system
\begin{eqnarray}
	\dot{x} &=& f(x,t)+g(x,t)u(y,t), \ y = \psi(x,t),  \label{x1sys}
\end{eqnarray}
where $x\in \mathbb{R}^n$, and the functions $f : \mathbb{R}^n \times \mathbb{R} \rightarrow \mathbb{R}^n$, $g : \mathbb{R}^n \times \mathbb{R} \rightarrow \mathbb{R}^{n \times n}$, $\psi : \mathbb{R}^n \times \mathbb{R} \rightarrow \mathbb{R}$ are unknown. 
Assume that $f$ and $g$ are twice continuously differentiable with respect to $x$ and assume that the value $y$ of $\psi(x,t)$ is available for measurement. 
Consider a controller 
$u$ given by
\begin{equation}
	u(y,t) = \sum_{i=1}^{m}k_{i}(y,t)h_{i,\omega}(t), \quad k_i: \mathbb{R} \times \mathbb{R} \rightarrow \mathbb{R}^n, \label{x1sysc}
\end{equation}
where the functions $k_i(y,t)$ are continuously differentiable and the scalar functions $\quad h_{i,\omega}(t)$ are piece-wise continuous. System (\ref{x1sys}), (\ref{x1sysc}) has the following equivalent closed-loop form 
\begin{eqnarray}
	\dot{x}(t) &=& f(x,t) + \sum_{i=1}^{m} b_{i}(x,t)h_{i,\omega}(t), \label{x1syscl} \\
	b_{i}(x,t) &=& g(x,t)k_{i}\left ( \psi(x,t),t \right ). 
\end{eqnarray}
Suppose that there exist functions (which will often be constant) $\{\lambda_{i,j}\}$ such that the 
functions defined as $ H_{i,\omega}(t) = \int_{t_0}^{t}h_{i,\omega}(\tau)d\tau$ 
satisfy the uniform limits and weak limits for all $t\in[t_0,t_0+T]$
\begin{equation}
	\lim_{\omega \rightarrow \infty} H_{i,\omega}(t) = 0, h_{i,\omega}(t) H_{j,\omega}(t) \wto  \lambda_{i,j}(t) \label{weakLimits},
\end{equation}
Consider also the average system related to (\ref{x1syscl}) as follows
\begin{equation}
	\dot{\bar{x}} = f(\bar{x},t) - \sum_{i\neq j=1}^{m} \lambda_{i,j}(t)  \frac{ \partial b_i(\bar{x},t) }{\partial \bar{x}}  b_j(\bar{x},t), \quad \bar{x}(0)=x(0). \label{x1sysave}
\end{equation}
For any compact set $K \subset \mathbb{R}^n$, any $t_0, T \in \mathbb{R}_{\geq 0}$, and any $\delta>0$, there exists $\omega^\star$ such that for each $\omega>\omega^\star$, the trajectories of (\ref{x1syscl}) and (\ref{x1sysave}), satisfy
$\left \| x(t) - \bar{x}(t) \right \| < \delta$ for all $t\in [ t_0, t_0 + T ]$.
Therefore, by \cite{ref-mor-aeyel-00}, uniform asymptotic stability of (\ref{x1sysave}) over $K$ implies that (\ref{x1syscl}) is 
$\frac{1}{\omega}$-SPUAS. 
\end{theorem}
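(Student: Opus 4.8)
The plan is to prove the trajectory-closeness estimate by a near-identity change of variables that strips off the leading oscillation, and then use a compactness argument to upgrade the weak limits (\ref{weakLimits}) into genuine convergence of the trajectories; the final $\frac{1}{\omega}$-SPUAS claim is then immediate from the result of \cite{ref-mor-aeyel-00} recalled above. First I would introduce the pullback coordinate $z = x - \sum_{k=1}^{m} b_k(x,t)\,H_{k,\omega}(t)$. Since $H_{k,\omega}(t)\to 0$ uniformly on $[t_0,t_0+T]$ and the $b_k$ are $C^1$ and bounded on compacts, the map $x\mapsto z$ is a near-identity diffeomorphism for all $\omega$ large, so it suffices to track $z$. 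Differentiating and substituting (\ref{x1syscl}), the term $\sum_j b_j h_{j,\omega}$ produced by $\dot x$ cancels exactly against the $-\sum_k b_k h_{k,\omega}$ coming from $\frac{d}{dt}\!\left(b_k H_{k,\omega}\right)$, leaving $\dot z = f(x,t) - \sum_{k,j}\frac{\partial b_k}{\partial x}b_j\,h_{j,\omega}H_{k,\omega} - \sum_k\!\big(\frac{\partial b_k}{\partial x}f + \frac{\partial b_k}{\partial t}\big)H_{k,\omega}$. The last sum vanishes uniformly because each factor $H_{k,\omega}\to 0$, while the double sum carries the oscillatory products $h_{j,\omega}H_{k,\omega}$ that converge only weakly.

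The core of the argument is therefore to pass to the limit in the integral form $z_\omega(t) = z_\omega(t_0) + \int_{t_0}^{t}[\,\cdots\,]\,ds$. I would first show that the family $\{z_\omega\}$ is uniformly bounded and equicontinuous on $[t_0,t_0+T]$: boundedness follows from a continuation argument confining the trajectory to a fixed compact set on which $f$, $b_k$, and $\frac{\partial b_k}{\partial x}$ are bounded, and equicontinuity follows from the Cauchy--Schwarz estimate $\big|\int_{s'}^{s} c(\sigma)\,h_{j,\omega}H_{k,\omega}\,d\sigma\big| \le \|c\|_\infty\,|s-s'|^{1/2}\,\|h_{j,\omega}H_{k,\omega}\|_{L^2}$, where the $L^2$-norms are uniformly bounded because weakly convergent sequences are bounded. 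By Arzel\`a--Ascoli, some subsequence $z_{\omega_n}$ then converges uniformly to a limit $z_\star$.

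It remains to identify $z_\star$ with $\bar x$. The $f$-term and the uniformly vanishing sum pass to the limit by continuity. The decisive step is the oscillatory double sum: writing $c_{\omega_n}(s) = \frac{\partial b_k(z_{\omega_n},s)}{\partial z}b_j(z_{\omega_n},s)$, uniform convergence $z_{\omega_n}\to z_\star$ gives $c_{\omega_n}\to c_\star$ strongly in $L^2$, while (\ref{weakLimits}) gives $h_{j,\omega_n}H_{k,\omega_n}\wto\lambda_{j,k}$; the elementary ``strong-times-weak'' lemma then yields $\int c_{\omega_n}\,h_{j,\omega_n}H_{k,\omega_n}\to\int c_\star\,\lambda_{j,k}$. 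Using the antisymmetry $\lambda_{j,k}=-\lambda_{k,j}$ (which follows from $h_{j,\omega}H_{k,\omega}+h_{k,\omega}H_{j,\omega}=\frac{d}{dt}(H_{j,\omega}H_{k,\omega})\wto 0$) together with the vanishing of the diagonal $\lambda_{k,k}=0$, these surviving cross-terms collapse to precisely the $i\neq j$ correction term $-\sum_{i\neq j}\lambda_{i,j}\frac{\partial b_i}{\partial \bar x}b_j$ of (\ref{x1sysave}). Hence $z_\star$ solves the averaged equation; since the $C^2$ hypotheses make the right-hand side of (\ref{x1sysave}) locally Lipschitz, its solution is unique, so $z_\star=\bar x$, and, every subsequence having a further subsequence with this same limit, the whole family converges uniformly. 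Undoing the transformation (again using $H_{k,\omega}\to 0$) gives $\|x(t)-\bar x(t)\|<\delta$ on $[t_0,t_0+T]$ for all $\omega>\omega^\star$, and the $\frac{1}{\omega}$-SPUAS conclusion follows from \cite{ref-mor-aeyel-00}.

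I expect the main obstacle to be exactly the weak, rather than uniform, convergence of the oscillatory products: one cannot substitute the weak limit into the nonlinearity directly, so the compactness step is indispensable, and it in turn rests on the a priori $L^2$-boundedness of $h_{j,\omega}H_{k,\omega}$ and on confining the trajectories to a compact set uniformly in $\omega$.
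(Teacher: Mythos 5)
Your overall strategy is the right one and, as far as this paper is concerned, it is the intended one: the paper never proves Theorem \ref{thm:Sch2} itself (it is imported from \cite{ref-Sch-Sch}), but it states that the proof rests on exactly the two ingredients you use, the Arzel\`a--Ascoli theorem and weak convergence. Your near-identity variable $z = x - \sum_k b_k H_{k,\omega}$ is the differential form of the integration-by-parts step behind the cited result, and the rest of your outline (confinement to a compact set by continuation, equicontinuity via Cauchy--Schwarz together with uniform $L^2$ bounds coming from weak convergence, Arzel\`a--Ascoli, the strong-times-weak limit lemma, uniqueness of the averaged solution, and the subsequence-of-subsequences upgrade) is a correct and standard way to carry it out, after which the $\frac{1}{\omega}$-SPUAS conclusion indeed follows from \cite{ref-mor-aeyel-00}.

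There is, however, one concrete bookkeeping error in your final identification step. Your $z$-equation contains $-\sum_{k,j}\frac{\partial b_k}{\partial x}b_j\,h_{j,\omega}H_{k,\omega}$, in which the index carried by $h$ belongs to the \emph{undifferentiated} factor $b_j$ and the index carried by $H$ belongs to the \emph{differentiated} factor $b_k$. Under the convention of (\ref{weakLimits}), namely $h_{i,\omega}H_{j,\omega}\wto\lambda_{i,j}$, your oscillatory sum therefore converges to $-\sum_{k,j}\lambda_{j,k}\frac{\partial b_k}{\partial x}b_j$, which by the antisymmetry $\lambda_{j,k}=-\lambda_{k,j}$ equals $+\sum_{i\neq j}\lambda_{i,j}\frac{\partial b_i}{\partial x}b_j$, i.e.\ the \emph{opposite} sign of the correction term in (\ref{x1sysave}). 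So antisymmetry does not make your expression ``collapse to precisely'' the stated average; it makes it collapse to its negative. The resolution is that your derivation is the correct one and the transcription of the theorem in this paper carries an index-order typo: the weak limits should be defined as $H_{i,\omega}h_{j,\omega}\wto\lambda_{i,j}$ (equivalently, the Jacobian in (\ref{x1sysave}) should act on $b_j$ rather than $b_i$). Two checks confirm this. First, the paper's own application of the theorem in the proof of Theorem \ref{thm1}, equations (\ref{lm1})--(\ref{lm2}), pairs the weak limit of $h_{c,m,l,\omega}H_{s,m,l,\omega}$ with the term in which the sine-type vector field is the one differentiated, exactly as in your derivation, and this is what produces the stabilizing negative semi-definite term in (\ref{nonafave}). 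Second, in the classical case $h_1=\sqrt{\omega}\cos\omega t$, $h_2=\sqrt{\omega}\sin\omega t$, $b_1=\sqrt{\alpha}\cos(k\psi)$, $b_2=-\sqrt{\alpha}\sin(k\psi)$, one gets $\lambda_{1,2}=-\tfrac12$, $\lambda_{2,1}=\tfrac12$, and your formula yields the well-known gradient-descent average $\tfrac12\bigl(\frac{\partial b_2}{\partial x}b_1-\frac{\partial b_1}{\partial x}b_2\bigr)=-\tfrac{k\alpha}{2}\nabla\psi$, whereas the literal (\ref{x1sysave}) yields its negative, a gradient ascent. So keep your derivation, but state explicitly that you prove the theorem under the corrected index convention instead of asserting agreement with (\ref{x1sysave}) as printed.
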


%
%%
%%%
%%%%
%%%%%
%%%%%%
%%%%%%%
%%%%%%%%%%%%%%%%%%%%%%%%%%%%%%%%%%%%%%%%%%%%%%%%%%%%%%%%%%%%%%%%%%%%%%%%%%%%%%%%%%%%%%%%
%%%%%%%%%%%%%%%%%%%%%%%%%%%%%%%%%%%%%%%%%%%%%%%%%%%%%%%%%%%%%%%%%%%%%%%%%%%%%%%%%%%%%%%%

\section{The Main Result}\label{sec:NonAffine}

%%%%%%%%%%%%%%%%%%%%%%%%%%%%%%%%%%%%%%%%%%%%%%%%%%%%%%%%%%%%%%%%%%%%%%%%%%%%%%%%%%%%%%%%
%%%%%%%%%%%%%%%%%%%%%%%%%%%%%%%%%%%%%%%%%%%%%%%%%%%%%%%%%%%%%%%%%%%%%%%%%%%%%%%%%%%%%%%%
%%%%%%%
%%%%%%
%%%%%
%%%%
%%%
%%
%
\begin{theorem}\label{thm1} Fix the system 
\begin{equation}
	\dot{x} = f(x,t)+\sum_{n=0}^{m}g_n(x,t)u^{2n+1}(x,t) \label{NonLinearSysOdd}, 
\end{equation}	
\begin{equation}
	f : \mathbb{R}^n \times \mathbb{R} \rightarrow \mathbb{R}^n, \ g_i : \mathbb{R}^n \times \mathbb{R} \rightarrow \mathbb{R}^{n \times n},  \nonumber
\end{equation}
where the functions $f$ and $g_i$ are twice continuously differentiable with respect to $x$ and piecewise differentiable with respect to $t$. 
Consider the related averaged system
\begin{eqnarray}
	\dot{\bar{x}} &=& f(\bar{x},t) - k\alpha A_m \left ( \frac{g_m(\bar{x},t)g^T_m(\bar{x},t)}{2^{4m+1}} \right )\left ( \nabla_{\bar{x}} V \right )^T, \nonumber \\
	A_m &=& \sum_{l=0}^{m}\binom{2m+1}{l}^2 \label{nonafave}
\end{eqnarray}
with $\bar{x}(0) = x(0)$, where $V : \mathbb{R}^n \times \mathbb{R} \rightarrow \mathbb{R}$ is twice continuously differentiable with respect to $x$ and piecewise differentiable with respect to $t$.  If there exist $k$, $\alpha$, such that $x^\star$ is a GUAS equilibrium point of (\ref{nonafave}), then (\ref{nonafave}) remains in some compact set $K$ for all $t>0$. In this case, for any $\delta>0$ there exists an $\omega^\star$ such that for all $\omega>\omega^\star$, using the controller
\begin{eqnarray}
	u_\omega(x,t) &=& \left(\alpha\omega\right)^{\frac{1}{2(2m+1)}}\cos \left ( \omega t + k V(x,t) \right ), \label{nonaf2}
\end{eqnarray} 
ensures that $x(t)$ remains within $\delta$ of $\bar{x}(t)$ for all $t>0$, and therefore $x(t)$ remains within the compact set $K_{+\delta} = \left \{ y \in \mathbb{R}^n \ \mathrm{s.t.} \ \sup_{x\in K} \left | x-y \right | \leq \delta \right \}$. Thus, $x^\star$ is a $\frac{1}{\omega}$ -SPUAS equilibrium point of (\ref{NonLinearSysOdd}), (\ref{nonaf2}). 

\end{theorem}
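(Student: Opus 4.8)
The plan is to reduce the non-affine system (\ref{NonLinearSysOdd}) to the affine-in-oscillation setting of Theorem \ref{thm:Sch2} by substituting the explicit controller (\ref{nonaf2}) and expanding. Writing $A = (\alpha\omega)^{1/(2(2m+1))}$ and $\phi = \omega t + kV(x,t)$, each power becomes $u_\omega^{2n+1} = A^{2n+1}\cos^{2n+1}(\phi)$. First I would apply the power-reduction identity $\cos^{2n+1}(\phi) = 4^{-n}\sum_{l=0}^{n}\binom{2n+1}{l}\cos((2n+1-2l)\phi)$ followed by the angle-addition formula $\cos(p\phi) = \cos(p\omega t)\cos(pkV) - \sin(p\omega t)\sin(pkV)$ with $p = 2n+1-2l$. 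This separates each term into a pure-time oscillation and a state-dependent coefficient, so the closed loop takes exactly the form (\ref{x1syscl}): the scalar signals $h_{i,\omega}(t)$ are the oscillations $A^{2n+1}\cos(p\omega t)$ and $A^{2n+1}\sin(p\omega t)$, carrying the $\omega$-dependent amplitude, while the coefficients $b_i(x,t)$ are $\pm 4^{-n}\binom{2n+1}{l}g_n(x,t)\cos(pkV)$ and the corresponding $\sin$ term. The point is that, although (\ref{NonLinearSysOdd}) is non-affine in $u$, the specific oscillatory choice of $u$ makes the closed loop affine in a finite family of oscillatory signals, which is all the averaging machinery of Theorem \ref{thm:Sch2} requires of this closed-loop form.

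Next I would verify the hypotheses (\ref{weakLimits}). Since $H_{i,\omega}(t) = \int_{t_0}^{t}h_{i,\omega}$ scales like $A^{2n+1}/\omega \sim \omega^{(2n+1)/(2(2m+1)) - 1}$, and the exponent is negative for every $n\le m$, the uniform limit $H_{i,\omega}\to 0$ holds. The decisive computation is the weak limit $\lambda_{i,j}$ of $h_{i,\omega}H_{j,\omega}$: its amplitude scales as $A^{2n_i+1}A^{2n_j+1}/\omega \sim \omega^{(n_i+n_j-2m)/(2m+1)}$, which vanishes as $\omega\to\infty$ unless $n_i = n_j = m$. Thus only the top-degree term $g_m$ can contribute, explaining its exclusive appearance in (\ref{nonafave}). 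Among the $n=m$ modes, the elementary weak limits quoted in Section \ref{sec:Back} (namely $\cos(p_i\omega t)\sin(p_j\omega t)\wto 0$, while $\cos^2,\sin^2\wto\tfrac12$ and products at distinct frequencies wash out) show that only a cosine mode paired with the sine mode of the same frequency $p = 2m+1-2l$ yields a nonzero $\lambda_{i,j} = \mp\alpha/(2p)$; all like-type and off-frequency pairs vanish.

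Finally I would assemble the averaged drift $-\sum_{i\ne j}\lambda_{i,j}\,\partial_x b_i\, b_j$. For each $l$ the two surviving pairs combine so that differentiating the trigonometric factors produces $\nabla_x V$, the identity $\sin^2+\cos^2=1$ removes the phase, and the frequency $p$ cancels against the $1/(2p)$ in $\lambda_{i,j}$; what remains is $-\tfrac{k\alpha}{2}\,4^{-2m}\binom{2m+1}{l}^2 g_m g_m^T(\nabla_x V)^T$. Summing over $l$ turns $\sum_l\binom{2m+1}{l}^2$ into $A_m$ and $4^{-2m}\cdot\tfrac12$ into $2^{-(4m+1)}$, which is precisely (\ref{nonafave}). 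The conclusion of Theorem \ref{thm:Sch2} then applies to this closed-loop form: GUAS of (\ref{nonafave}) keeps $\bar x$ in a compact set $K$, the finite-horizon closeness $\|x-\bar x\|<\delta$ combined with asymptotic stability extends to all $t>0$ (so $x$ stays in $K_{+\delta}$), and by \cite{ref-mor-aeyel-00} $x^\star$ is $\tfrac1\omega$-SPUAS. The main obstacle is the middle step: because the $h_{i,\omega}$ carry $\omega$-dependent amplitudes, one must check that the growth of $A^{2n+1}$ is exactly balanced by the $1/\omega$ decay of $H_{j,\omega}$ so that (\ref{weakLimits}) holds with finite $\lambda_{i,j}$, and simultaneously bookkeep the double sum over $(n,l)$ to confirm that every cross-degree and off-frequency interaction is annihilated in the limit.
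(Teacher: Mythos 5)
Your proposal is correct and follows essentially the same route as the paper's own proof: expand $\cos^{2n+1}(\omega t + kV)$ via power-reduction and angle-addition identities to put the closed loop in the form (\ref{x1syscl}), verify the uniform limits and show that all cross-degree and off-frequency weak limits vanish so that only the $n=m$ harmonics survive, assemble the averaged drift so that $\sin^2+\cos^2=1$ yields $-k\alpha\sum_l \binom{2m+1}{l}^2 2^{-(4m+1)} g_m g_m^T (\nabla_{\bar x} V)^T$, and invoke Theorem \ref{thm:Sch2} together with \cite{ref-mor-aeyel-00} for the SPUAS conclusion. If anything, your bookkeeping is slightly cleaner than the paper's: you retain the $1/(2p)$ factor in $\lambda_{i,j}$ and cancel it explicitly against the chain-rule factor $p k$ from differentiating $\cos(pkV)$, whereas the paper's displayed weak limits (\ref{aml}) omit the $1/b_{m,l}$ from integrating $H_{j,\omega}$ and recover it only implicitly when the factor $b_{m,l}$ reappears in the sum of (\ref{lm12}) and (\ref{lm22}).
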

\begin{proof} The closed-loop form of system (\ref{NonLinearSysOdd}), (\ref{nonaf2}) is (throughout the proof we omit the arguments of $f$, $g_i$, and $V$ to simplify the notation)
\begin{equation}
	\dot{x} = f(x,t) + \sum_{n=0}^{m}g_n(x,t)\left(\alpha\omega\right)^{\frac{2n+1}{2(2m+1)}}\cos^{2n+1} \left ( \omega t + k V \right ).
\end{equation}
Let $b_n = 2n+1$ and $b_{n,l} = 2n+1-2l$, apply trigonometric power identities, and rewrite the sum as
\begin{equation} 
\sum_{n=0}^{m}g_n\sum_{l=0}^{n}\frac{\left(\alpha\omega\right)^{\frac{b_n}{2b_m}}}{2^{2n}}\binom{b_n}{l}\cos \bigg ( b_{n,l}\left ( \omega t + k V \right ) \bigg ). \label{s}
\end{equation} 
Apply trigonometric identities to expand (\ref{s}) as the sum of
\begin{equation}
	\sum_{n=0}^{m}g_n\sum_{l=0}^{n}\underbrace{\frac{\left(\alpha\omega\right)^{\frac{b_n}{2b_m}}}{2^{2n}}\binom{b_n}{l}\cos \bigg ( b_{n,l}\omega t \bigg )}_{h_{c,n,l,\omega}(t)}\cos \bigg ( b_{n,l}kV \bigg ) \nonumber
\end{equation}
and
\begin{equation}
	-\sum_{n=0}^{m}g_n \sum_{l=0}^{n}\underbrace{ \frac{\left(\alpha\omega\right)^{\frac{b_n}{2b_m}}}{2^{2n}}\binom{b_n}{l}\sin \bigg ( b_{n,l}\omega t \bigg )}_{h_{s,n,l,\omega}(t)}\sin \bigg ( b_{n,l}kV \bigg ). \nonumber
\end{equation}
For all $n\leq m$ and $\omega \geq \frac{1}{\alpha}$, $\left(\alpha\omega\right)^{\frac{2n+1}{2(2m+1)}} \leq \sqrt{\alpha \omega}$, 
the functions $h_{s,n,l,\omega}(t)$ and $h_{c,n,l,\omega}(t)$ have uniform limits
\begin{equation}
	\lim_{\omega \rightarrow \infty} H_{c/s,n,l,\omega}(t) = \lim_{\omega \rightarrow \infty} \int_{t_0}^{t}h_{c/s,n,l,\omega}(\tau)d\tau = 0, 
\end{equation}
and for all $n_1,n_2<m$, have weak limits
\begin{equation}
	h_{c/s,n_1,i,\omega}(t) H_{c/s,n_2,j,\omega}(t) \wto  0.
\end{equation}
For $n=m$, we must consider all of the terms
\begin{equation}
	h_{c,m,l,\omega}(t) = \frac{1}{2^{2m}} \binom{b_m}{l}\sqrt{\alpha \omega} \cos \left ( b_{m,l} \omega t \right),
\end{equation}
\begin{equation}
	h_{s,m,l,\omega}(t) = \frac{1}{2^{2m}} \binom{b_m}{l}\sqrt{\alpha \omega} \sin \left ( b_{m,l} \omega t \right).
\end{equation}
The products $h_{c,m,i,\omega}(t) H_{s,m,j,\omega}(t)$ are given by
\begin{eqnarray}
	&& -\alpha \left ( \frac{1}{2^{2m}} \right )^2 \binom{b_m}{l}^2\cos^2 \left ( b_{m,l} \omega t \right) \nonumber \\
	&& +\alpha \left ( \frac{1}{2^{2m}} \right )^2 \binom{b_m}{l}^2 \cos \left ( b_{m,l} \omega t_0 \right)\cos \left ( b_{m,l} \omega t \right).
\end{eqnarray}
The terms $ \cos^2 \left ( b_{m,l} \omega t \right)$ and $\cos\left ( b_{m,l} \omega t \right)$ weakly converge to $1/2$ and $0$, respectively, therefore
\begin{equation}
	h_{c,m,i,\omega}(t) H_{s,m,j,\omega}(t)  \wto - \alpha\frac{1}{2^{4m+1}}\binom{b_m}{l}^2,
\end{equation}
\begin{equation}
	h_{s,m,i,\omega}(t) H_{c,m,j,\omega}(t)  \wto \underbrace{\alpha\frac{1}{2^{4m+1}}\binom{b_m}{l}^2}_{a_{m,l}}. \label{aml}
\end{equation}
Therefore, by application of Theorem \ref{thm:Sch2}, the $b_{m,l} \omega$ frequency component's contribution to the average dynamics are
\begin{flalign} 
-\alpha a_{m,l}g_m\cos(kb_{m,l}V)\frac{\partial}{\partial \bar{x}}\bigg ( g_m \sin(kb_{m,l}V) \bigg ) \label{lm1} \\
+\alpha a_{m,l}g_m\sin(kb_{m,l}V)\frac{\partial}{\partial \bar{x}}\bigg ( g_m \cos(kb_{m,l}V) \bigg ). \label{lm2}
\end{flalign} 
The term (\ref{lm1}) can be expanded as
\begin{flalign} 
-\alpha a_{m,l}g_m\cos b_{m,l}V) \frac{\partial g_m}{\partial \bar{x}} \sin(kb_{m,l}V) \nonumber \\
-\alpha a_{m,l}g_m\cos(kb_{m,l}V)kb_{m,l}\left ( \nabla_{\bar{x}} V \right )^Tg_m \cos(kb_{m,l}V), \label{lm12}
\end{flalign} 
and (\ref{lm2}) can be expanded as
\begin{flalign} 
+\alpha a_{m,l}g_m\sin(kb_{m,l}V) \frac{\partial g_m}{\partial \bar{x}} \cos(kb_{m,l}V) \nonumber \\
-\alpha a_{m,l}g_m\sin(kb_{m,l}V)kb_{m,l}\left ( \nabla_{\bar{x}} V \right )^Tg_m \sin(kb_{m,l}V). \label{lm22}
\end{flalign}
Adding (\ref{lm12}) and (\ref{lm22}) we are left with
\begin{equation}
	-k\alpha b_{m,l}a_{m,l}g_mg^T_m\left ( \nabla_{\bar{x}} V \right )^T
\end{equation}
Plugging in for the values of $a_{m,l}$ from (\ref{aml}), the overall average system dynamics are given by
\begin{equation}
	\dot{\bar{x}} = f(\bar{x},t) - k\alpha \sum_{l=0}^{m}\binom{b_m}{l}^2 \left ( \frac{g_m(\bar{x},t)g^T_m(\bar{x},t)}{2^{4m+1}} \right )\left ( \nabla_{\bar{x}} V \right )^T \nonumber
\end{equation}
and the desired convergence and the existence of the appropriate $\omega$ are guaranteed by Theorem \ref{thm:Sch2}. 
\end{proof}

Theorem \ref{thm1} implies that to stabilize system (\ref{NonLinearSysOdd}) one must choose 
the gain $k$, dithering amplitude $\alpha$, and a function $V(x,t)$ relative to upper and lower bounds on 
$\left \| f(x,t) \right \|$ and $\left \| g_m(x,t)g^T_m(x,t) \right \|$, respectively. Once $k$, $\alpha$, and $V$ are chosen, there exists a sufficiently large $\omega^\star$, such that for all $\omega>\omega^\star$, the above results hold. In practice we specify $\omega$ and design the controller to maintain the values of $u$ within some compact set $K_u$. Because $g_m(x,t)g^T_m(x,t) \geq 0$ one need not know the sign of $g_m(x,t)$ which simplifies significantly the stabilization problem. 
%
%%
%%%
%%%%
%%%%%
%%%%%%
%%%%%%%
%%%%%%%%%%%%%%%%%%%%%%%%%%%%%%%%%%%%%%%%%%%%%%%%%%%%%%%%%%%%%%%%%%%%%%%%%%%%%%%%%%%%%%%%
%%%%%%%%%%%%%%%%%%%%%%%%%%%%%%%%%%%%%%%%%%%%%%%%%%%%%%%%%%%%%%%%%%%%%%%%%%%%%%%%%%%%%%%%

\section{An application of the Main Result}\label{sec:ExNonAffine}

%%%%%%%%%%%%%%%%%%%%%%%%%%%%%%%%%%%%%%%%%%%%%%%%%%%%%%%%%%%%%%%%%%%%%%%%%%%%%%%%%%%%%%%%
%%%%%%%%%%%%%%%%%%%%%%%%%%%%%%%%%%%%%%%%%%%%%%%%%%%%%%%%%%%%%%%%%%%%%%%%%%%%%%%%%%%%%%%%
%%%%%%%
%%%%%%
%%%%%
%%%%
%%%
%%
%

%Our main result is for systems where the control effort enters the system as an odd polynomial. 
In this section we give sufficient conditions to use our main result to control a general nonlinear system of the form:
\begin{equation}
	\dot{x} =  f(x,t) + h(x,t), \  h(x,t) = \sum_{n=0}^{m}g_n(x,t)v_n(u(x,t)) \label{NonLinearSys}
\end{equation} 
where the functions $f$ and $g_n$ are twice differentiable and the functions $v_n: \mathbb{R} \rightarrow \mathbb{R}$ 
are continuous, odd functions of $u$, i.e., 
for fixed $x$ and $t$, $v(-u(x,t))=-v(u(x,t))$.  We note that the non-affine system (\ref{NonLinearSys}) does not satisfy the hypotheses of the standard Lie Bracket averaging results of 
\cite{ref-durr-stan-john-11,ref-Sch-Krstic-TAC,ref-Sch-NewES}. 

Lemma \ref{Approx}, a technical result given below, shows that system (\ref{NonLinearSys}) may be written as 
$\dot{x} = \dot{\tilde{x}} + E$, 
an arbitrarily small perturbation of a system of the form of Theorem \ref{thm1}:
\begin{equation}
\dot{\tilde{x}}=f(\tilde{x},t) + p(\tilde{x},t), \ p = \sum_{n=0}^{m}\tilde{g}_n(\tilde{x},t) u^{2n+1}(\tilde{x},t). \label{nonaf1}
\end{equation}
If we stabilize the origin of the odd polynomial system (\ref{nonaf1}) with an appropriate Lyapunov function, then the results of Chatper 9 
of \cite{ref-khalil} imply that the origin of system (\ref{NonLinearSys}) is also stable for a set of initial conditions which can be made arbitrarily large by making the pertubation arbitrarily small.

\begin{lemma}\label{Approx}
Fix the system (\ref{NonLinearSys}), $[t_0, t_0+T]\subset\R_{\geq0}$, $K\subset \R^n$, $K_u \subset \R$ and $\epsilon>0$. 
There exists a perturbation $E$ such that $\dot{x} = \dot{\tilde{x}} + E$ and $||E||_{K\times [t_0,t_0+T]} < \epsilon$.
\end{lemma}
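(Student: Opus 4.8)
The plan is to reduce the lemma to a uniform polynomial approximation of each scalar nonlinearity $v_n$ on a symmetric interval, exploiting the oddness of $v_n$ to force the approximating polynomials to contain only odd powers of $u$, so that the resulting $p$ has exactly the odd-polynomial form (\ref{nonaf1}) required by Theorem \ref{thm1}.

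First I would note that $K_u$ is compact, hence bounded, so it lies in a symmetric interval $[-c,c]$, while over $K\times[t_0,t_0+T]$ the control values $u(\tilde{x},t)$ remain in $K_u$. For each $n=0,\dots,m$ the Weierstrass approximation theorem provides, for any prescribed $\eta>0$, a polynomial $Q_n$ with $\sup_{u\in[-c,c]}\left|v_n(u)-Q_n(u)\right|<\eta$. Because $v_n$ is odd I would replace $Q_n$ by its odd part $P_n(u)=\tfrac12\left(Q_n(u)-Q_n(-u)\right)$. Writing $v_n(u)=\tfrac12\left(v_n(u)-v_n(-u)\right)$ and applying the triangle inequality shows $\sup_{u\in[-c,c]}\left|v_n(u)-P_n(u)\right|<\eta$ as well, while $P_n$ now contains only odd powers, $P_n(u)=\sum_j c_{n,j}u^{2j+1}$.

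Next I would set $p(\tilde{x},t)=\sum_{n=0}^{m}g_n(\tilde{x},t)P_n(u(\tilde{x},t))$ and regroup by powers of $u$, obtaining $p=\sum_k \tilde{g}_k(\tilde{x},t)u^{2k+1}$ with $\tilde{g}_k=\sum_n c_{n,k}g_n$. Each $\tilde{g}_k$ is a finite constant-coefficient combination of the $g_n$ and therefore inherits their twice-differentiability in $x$ and piecewise differentiability in $t$, so $\dot{\tilde{x}}=f(\tilde{x},t)+p(\tilde{x},t)$ is genuinely of the form covered by Theorem \ref{thm1}. I would then define the perturbation as the residual $E=h-p=\sum_{n=0}^{m}g_n\left(v_n(u)-P_n(u)\right)$, so that $\dot{x}=\dot{\tilde{x}}+E$ holds by construction.

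Finally I would estimate $E$: since each $g_n$ is continuous on the compact set $K\times[t_0,t_0+T]$ it is bounded there by some $M_n$, whence $\|E\|_{K\times[t_0,t_0+T]}\le\sum_{n=0}^{m}M_n\sup_{u\in K_u}\left|v_n(u)-P_n(u)\right|\le\eta\sum_{n=0}^{m}M_n$. Choosing $\eta<\epsilon\big/\sum_{n=0}^{m}M_n$ at the approximation step then gives $\|E\|_{K\times[t_0,t_0+T]}<\epsilon$. The one genuinely delicate point is the preservation of oddness: a generic Weierstrass approximant would introduce even powers of $u$ that break the very structure on which all of Theorem \ref{thm1} depends, and the symmetrization $Q_n\mapsto\tfrac12\left(Q_n(u)-Q_n(-u)\right)$ is precisely what removes them without enlarging the approximation error.
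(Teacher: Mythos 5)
Your proposal is correct, and its overall architecture matches the paper's proof: bound the $g_n$ on the compact set $K\times[t_0,t_0+T]$, approximate each odd nonlinearity $v_n$ by an odd polynomial uniformly on the compact control range, apply the triangle inequality, regroup by powers of $u$ to get the form (\ref{nonaf1}), and set $E=h-p$. The one genuine difference is the tool used for the key approximation step: the paper invokes the M\"{u}ntz--Sz\'{a}sz theorem as a black box to assert that odd polynomials are dense in the odd continuous functions, whereas you use only the classical Weierstrass theorem and then symmetrize, replacing $Q_n$ by its odd part $P_n(u)=\tfrac12\bigl(Q_n(u)-Q_n(-u)\bigr)$ and checking via $v_n(u)=\tfrac12\bigl(v_n(u)-v_n(-u)\bigr)$ that the error does not grow. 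Your route is more elementary and self-contained, and it makes explicit \emph{why} the approximant can be taken odd --- which is the structural point the whole reduction to Theorem \ref{thm1} hinges on; it also quietly requires (and you correctly arrange) that the approximation domain $[-c,c]\supseteq K_u$ be symmetric, a point the black-box citation obscures. The paper's route buys brevity, and the M\"{u}ntz--Sz\'{a}sz viewpoint generalizes further (density of $\{u^{\lambda_k}\}$ whenever $\sum 1/\lambda_k=\infty$, not just the full odd sequence), but nothing in the lemma needs that generality. As a side benefit, your bookkeeping with $\eta<\epsilon/\sum_n M_n$ avoids two small slips in the paper's write-up: the density is stated there on the time interval $[t_0,t_0+T]$ rather than on $K_u$, and the bound $\epsilon/(mG)$ miscounts the $m+1$ terms in the sum.
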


\begin{proof}
Since each $g_n$ is continuous and $K\times [t_0,t_0+T]$ is compact there exists a $G > ||g_n||_{K\times [t_0,t_0+T]}$ for all $n$. The M\"{u}ntz-Sz\'{a}sz Theorem, a generalization of the Stone-Weierstrass Theorem (see, e.g. \cite{Rudin}), 
implies that the odd polynomials are dense in the set of odd continuous functions on $[t_0,t_0+T]$. 
Thus, for each $n$ there exists an odd polynomial $p_n$ so that $\| v_n - p_n \|_{K_u} < \frac{\epsilon}{mG}$. 
The triangle inequality implies that $p' = \sum_{n=0}^{m}g_n p_n$ satisfies $||h-p'|| < \epsilon$. 
To put $p'$ into the form of $p$, let $2m+1$ be the maximum degree of all of the $p_n(u)$s and let $\tilde{g}_n$ 
denote the linear combinations of $g_n$ produced by grouping the terms of $g_n$ by 
the powers of $u$. The result is given by letting $E=h-p$ and writing $\dot{x} = f(x,t) + p + E$.
\end{proof}

%
%%
%%%
%%%%
%%%%%
%%%%%%
%%%%%%%
%%%%%%%%%%%%%%%%%%%%%%%%%%%%%%%%%%%%%%%%%%%%%%%%%%%%%%%%%%%%%%%%%%%%%%%%%%%%%%%%%%%%%%%%
%%%%%%%%%%%%%%%%%%%%%%%%%%%%%%%%%%%%%%%%%%%%%%%%%%%%%%%%%%%%%%%%%%%%%%%%%%%%%%%%%%%%%%%%

\section{Example of System Not Affine in Control}\label{sec:NonAffineEx}

%%%%%%%%%%%%%%%%%%%%%%%%%%%%%%%%%%%%%%%%%%%%%%%%%%%%%%%%%%%%%%%%%%%%%%%%%%%%%%%%%%%%%%%%
%%%%%%%%%%%%%%%%%%%%%%%%%%%%%%%%%%%%%%%%%%%%%%%%%%%%%%%%%%%%%%%%%%%%%%%%%%%%%%%%%%%%%%%%
%%%%%%%
%%%%%%
%%%%%
%%%%
%%%
%%
%

\begin{figure}[!t]
\centering
\includegraphics[width=.35\textwidth]{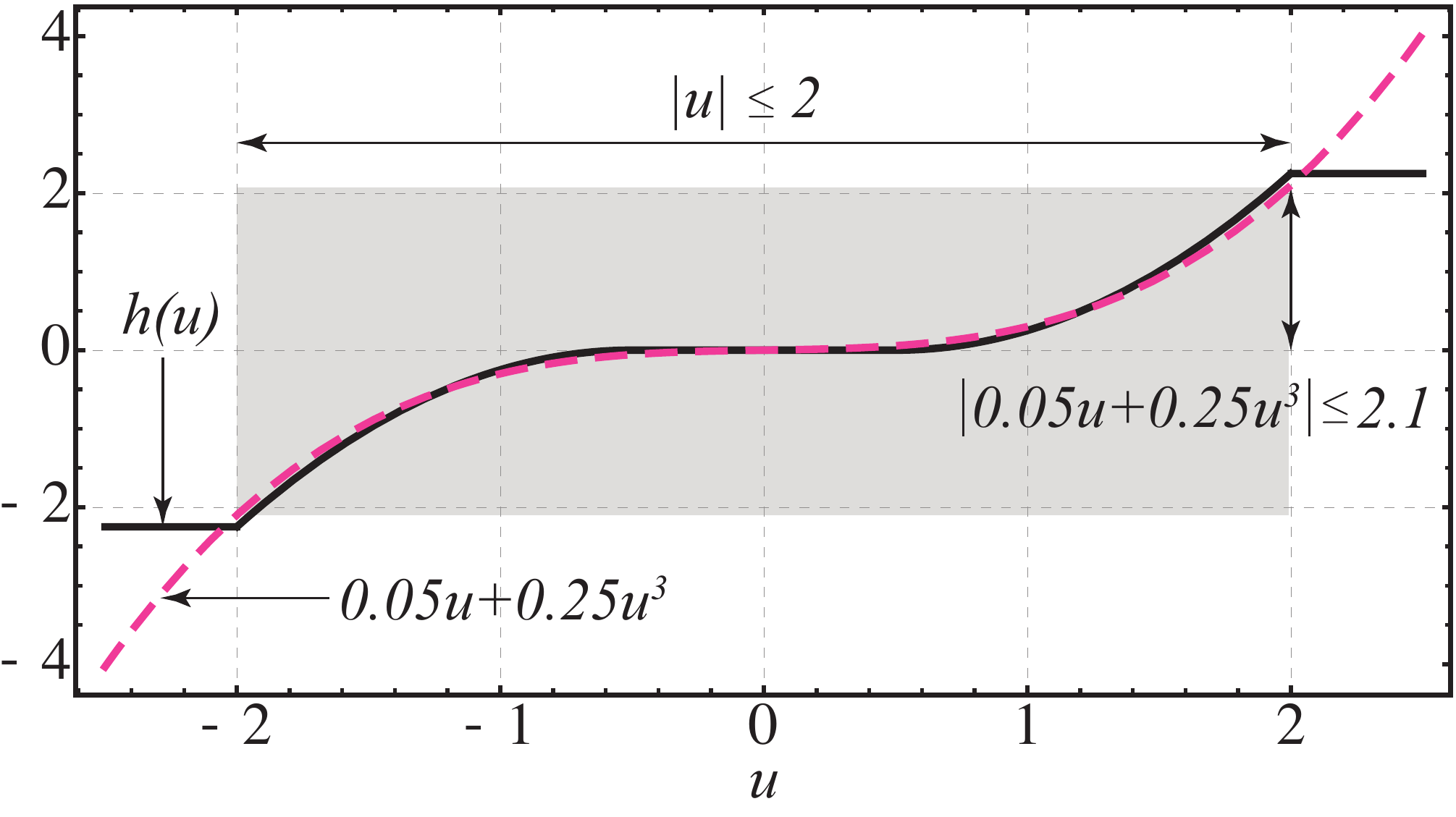}
\caption{Polynomial approximation of $h(u)$ for $|u|<2$.}
\label{fig:nonaf}
\end{figure}

Non-affine controllers, in particular non-linear controllers with dead zones, arise in a variety of practical control systems 
\cite{ref-nonaff0,ref-Recker-dead}-\cite{ref-khalil}. 
For example, a water cooling system whose flow rate is controlled by a valve with limited maximum open surface area and digital resolution-limited minimum valve opening setting. We provide examples that 
demonstrate how to develop a controller for systems in which the control effort enters 
through an odd non-linear function $h(u)$. 
In what follows we take the common approach of approximating $h(u)$ with an odd polynomial $p(u)$ \cite{ref-poly,ref-poly2}.  

\begin{example}\label{examp1}
Consider the system
\begin{equation}
	\dot{x} = f(x,t) + g(x,t)h(u), \label{nlex1}
\end{equation}
and the general approximation $h(u) \approx p(u)= a_1 u + a_3 u^3$, which produces the approximate system:
\begin{equation}
	\dot{x} = f(x,t) + g(x,t)\left (a_1 u(x,t) + a_3 u^3(x,t) \right ). \label{naf}
\end{equation}
We will design a controller of the form
\begin{equation}
	u = \left ( \alpha\omega \right )^\frac{1}{6} \cos \left ( \omega t + kV(x) \right ). \label{uex1}
\end{equation}
The reason for the choice $(\alpha\omega)^{\frac{1}{6}}$ is explained as follows. The closed loop dynamics of (\ref{naf}), (\ref{uex1}) are given by
\begin{eqnarray}
	\dot{x} &=& f(x,t) + a_1 g(x,t) \left ( \alpha\omega \right )^\frac{1}{6}\cos\left ( \omega t + kV(x) \right )  \nonumber \\
	&& + a_3 g(x,t)\sqrt{\alpha\omega} \cos^3\left ( \omega t + kV(x) \right ).
\end{eqnarray}
The $\cos()$ and $\cos^3()$ terms can be expanded as
\begin{flalign}
a_1g(x,t)
\left ( \alpha\omega \right )^\frac{1}{6} \bigg ( \cos\left ( \omega t  \right ) \cos \left (kV \right )  - \sin\left ( \omega t  \right ) \sin \left (kV \right ) \bigg )
 \nonumber
\end{flalign}
\begin{equation}
+ a_3g(x,t)0.75 \sqrt{\alpha\omega} \bigg ( \cos \left ( \omega t \right ) \cos \left ( kV \right ) - \sin \left ( \omega t \right ) \sin \left ( kV \right ) \nonumber
\end{equation}
\begin{equation}
+ 0.25\cos \left ( 3 \omega t \right ) \cos \left ( 3 kV \right ) - 0.25\sin \left ( 3 \omega t \right ) \sin \left ( 3 kV \right ) \bigg ). \nonumber
\end{equation}

Theorem \ref{thm:Sch2} implies that as $\omega \to \infty$ products containing the $a_1g(x,t)$ terms contain powers of $\omega$ of the form $1/\omega^\frac{2}{3}$ and $1/\omega^\frac{1}{3}$, which uniformly converge to zero. 
In the remaining terms, by choosing $(\alpha\omega)^{\frac{1}{6}}$, for $u^3$, we get terms with amplitudes proportional to 
$\sqrt{\alpha\omega}$ and by Theorem \ref{thm:Sch2}, are left with products in which the $\omega$ amplitude dependence has disappeared, only leaving terms of the form $\cos^2(\omega t)$, and $\sin^2(\omega t)$, which weakly converge to $1/2$, leaving us with the average system
\begin{equation}
	\dot{\bar{x}} = f(\bar{x},t) - \frac{k\alpha}{2}a^2_3g^2(\bar{x},t) \left ( \left (3/4 \right )^2 +  \left (1/4 \right )^2 \right ) \frac{\partial V(\bar{x})}{\partial \bar{x}}, \label{AvgDyn}
\end{equation}
where $\bar{x}(0)=x(0)$. Thus, to stabilize the origin, it suffices to choose $\omega, k, \alpha, V$ sufficiently large with respect to $a^2_3g^2(x,t)$ and $f(x,t)$. 

We consider the special case of System (\ref{nlex1}), where
\begin{equation}
	h(u) = \left\{\begin{array}{cc} 
	0 & |u|<0.5 \\
	\mathrm{sgn}(u)(|u|-0.5)^2 & 0.5<|u|<=2 \\
	2.25 & 2 < |u| \end{array}\right. \label{hex1}.
\end{equation}
Figure \ref{fig:exp1} shows the results of a simulation of
\begin{equation}
	\dot{x} = \frac{\cos(2t)x^2}{2}+2\cos\left ( 20 t \right )h(u), \ u = \left ( \alpha\omega \right )^\frac{1}{6} \cos \left ( \omega t + kx^2 \right )\label{ex1s1}
\end{equation}
with control parameters $\omega = 200$, $\alpha = 64/\omega$, $k=50$, and $x(0)=1.5$, where $h(u)$ is given by (\ref{hex1}) and the controller was designed using the approximation (\ref{naf}) with $a_1 = 0.05$ and $a_3 = 0.25$, which has average dynamics
\begin{equation}
	\dot{\bar{x}} = 0.5 \cos(2t) \bar{x}^2 - (5/16)k\alpha  \cos^2\left(20t\right)\bar{x}, \quad \bar{x}(0)=x(0). \label{avex1}
\end{equation}
\end{example}

Higher order systems in strict feedback form can be handled as suggested in \cite{ref-Sch-Krstic-TAC}. For example, for a second order system, one would apply the controller $u = \left ( \alpha\omega \right )^\frac{1}{6} \cos \left ( \omega t + k\left(x_1 + 2x_2\right)^2 \right )$.
\begin{figure}[!t]
\centering
\includegraphics[width=.4\textwidth]{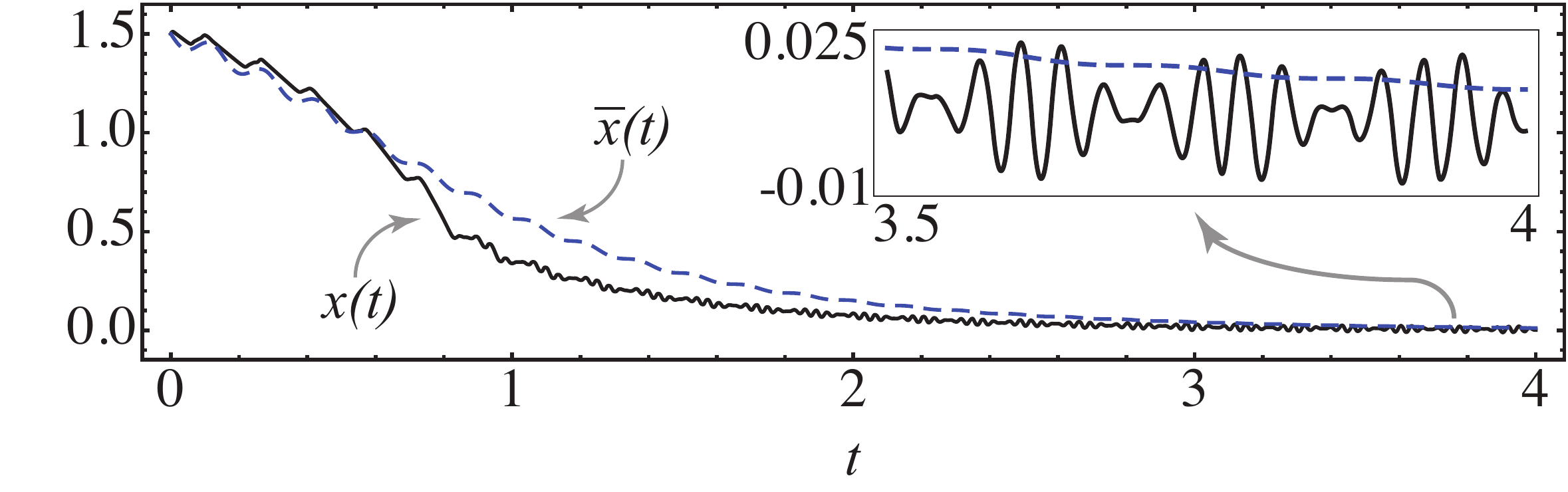}
\caption{Trajectory $x(t)$ of (\ref{ex1s1}) shown alongside the trajectory $\bar{x}(t)$ of average system (\ref{avex1}).The system behaves as expected despite our controller design being based on the approximation of the nonlinearity.}
\label{fig:exp1}
\end{figure}

\section{Robustness of Nonlinear Approximation}\label{sec:even}

%%%%%%%%%%%%%%%%%%%%%%%%%%%%%%%%%%%%%%%%%%%%%%%%%%%%%%%%%%%%%%%%%%%%%%%%%%%%%%%%%%%%%%%%
%%%%%%%%%%%%%%%%%%%%%%%%%%%%%%%%%%%%%%%%%%%%%%%%%%%%%%%%%%%%%%%%%%%%%%%%%%%%%%%%%%%%%%%%
%%%%%%%
%%%%%%
%%%%%
%%%%
%%%
%%
%

When one designs a controller for a system of the form (\ref{NonLinearSys}), 
the function $h(x,t,u)$ may be uncertain or entirely unknown, and the approximation $\tilde{h}(x,t,u)$ may be incorrect. In this section we consider a more general class of systems with even-powered odd nonlinearities:
\begin{equation}
	\dot{x}  = f(x,t) + \sum_{i=0}^{n_o}g_{2i+1}(x,t)u^{2i+1} + \epsilon \sum_{i=1}^{n_e}g_{2i}(x,t)u^{2i}. \label{gensys}
\end{equation}
%which include a perturbation of even powers, $\epsilon u^{2i}$, but are dominated by an odd highest power nonlinearity $u^{2n_o+1}$. 
If there were no even terms and we knew that $2m+1$ was the highest power odd nonlinearity of (\ref{gensys}), then we would choose a controller based on Theorem \ref{thm1}: 
\begin{equation}
	u_m = \left(\alpha\omega\right)^{\frac{1}{2(2m+1)}}\cos \left ( \omega t + k V \right ). \label{u135u}
\end{equation} 
For unknown $n_o$, the power of our controller $2m+1$ might not equal the highest odd power of the system $2n_0+1$, and therefore the averaging analysis of Theorem \ref{thm1} breaks down when $m<n_o$ introduces terms of the form $\left(\alpha\omega\right)^{\frac{2n_o+1}{2(2m+1)}}\cos^{2n_o+1} \left ( \omega t + k V \right )$, resulting in divergent weak limits which are not independent of $\omega$, because in this case $\frac{2n_o+1}{2(2m+1)} > \frac{1}{2}$. 
Also, the averaging analysis of Theorem \ref{thm1} further breaks down since the even powers of $u$ 
introduce into the system dynamics 
positive semi-definite terms of the form $\left(\alpha\omega\right)^{\frac{2l}{2(2m+1)}}\cos^{2l} \left ( \omega t + k V \right )$ 
which grow without bound as $\omega$ grows. 
However, it turns out that we may still be able to approximate the behavior of this system. 

\begin{figure*}[!t]
\centering
\includegraphics[width=.225\textwidth]{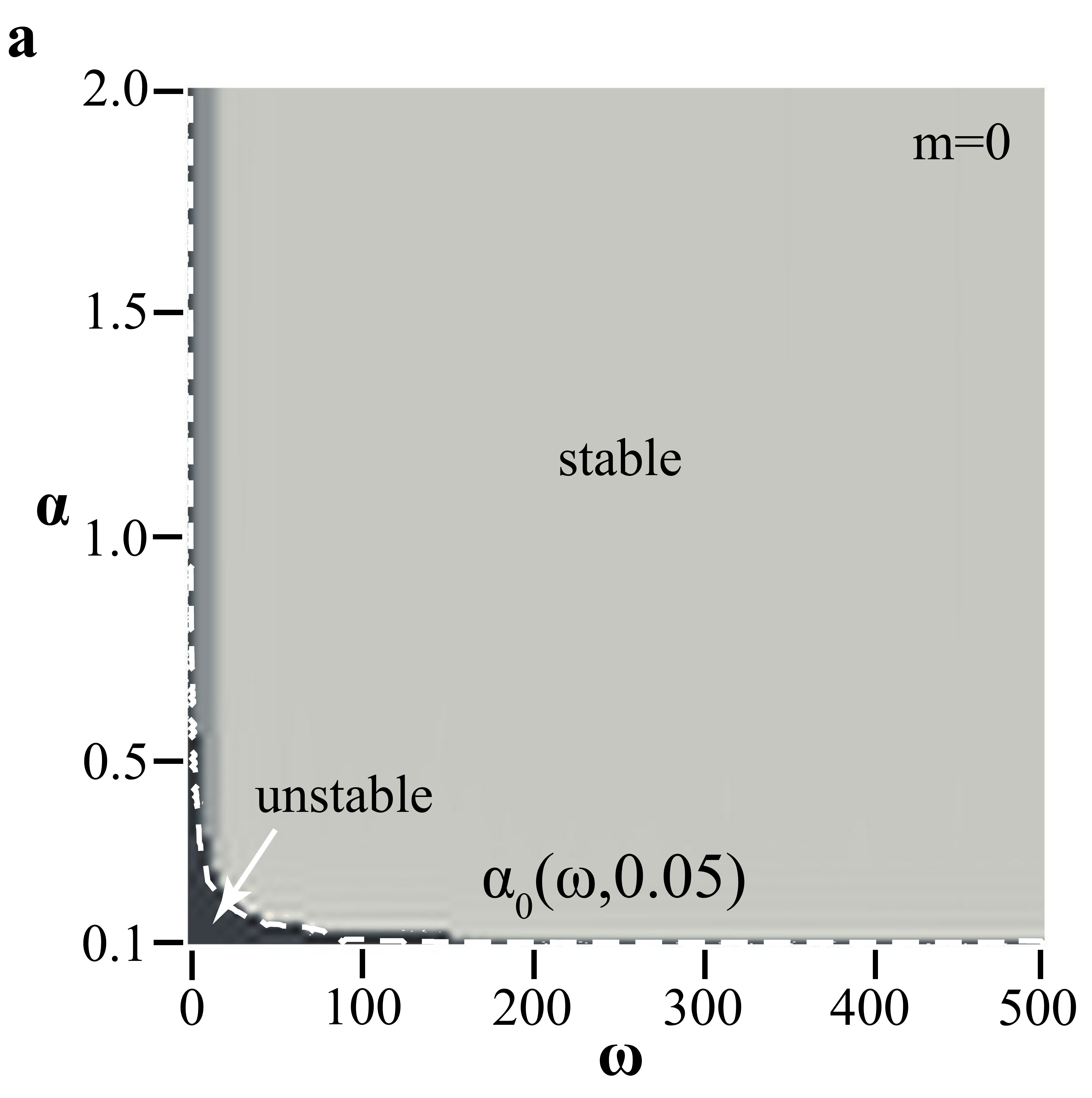}
\includegraphics[width=.225\textwidth]{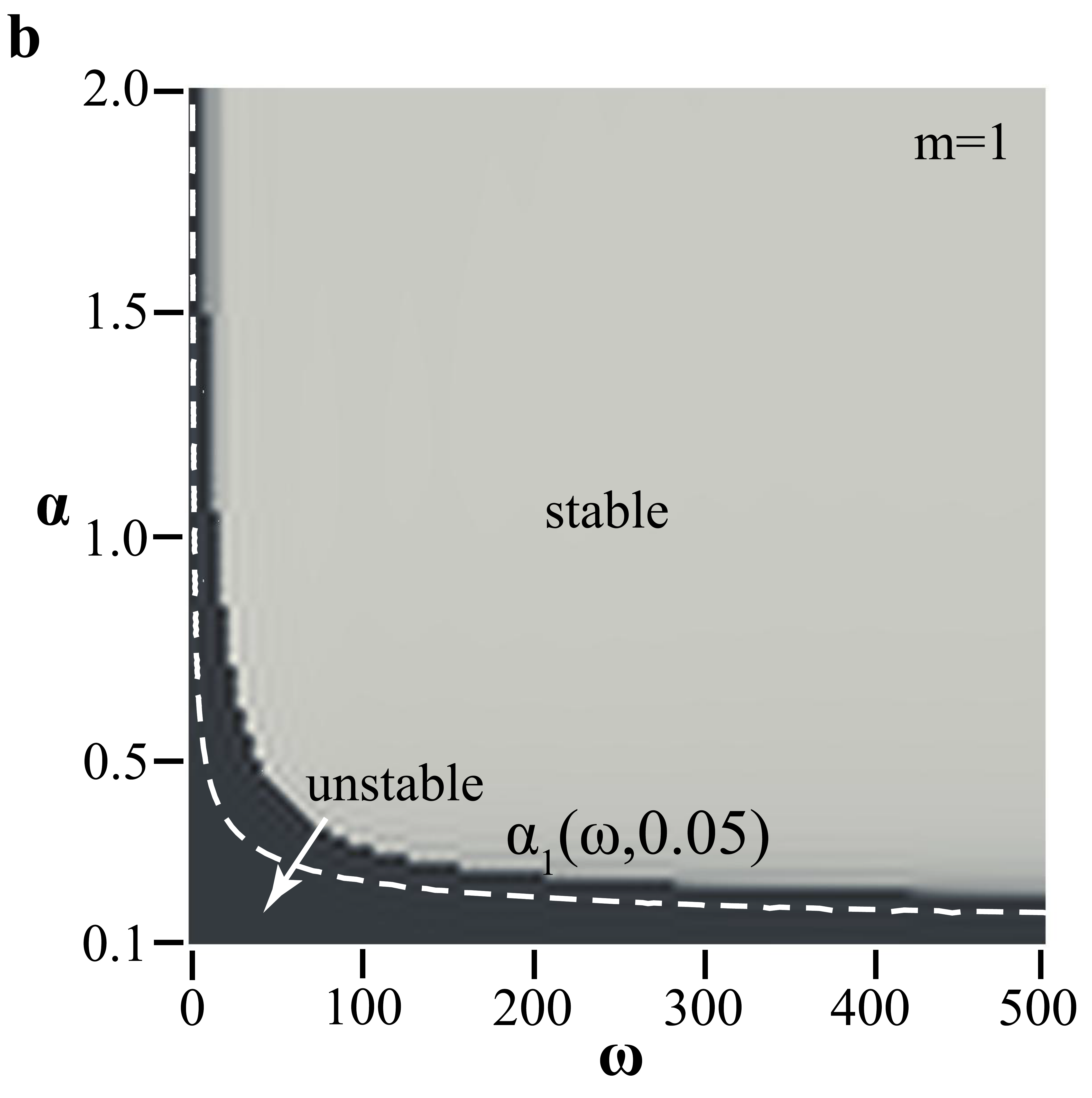}
\includegraphics[width=.258\textwidth]{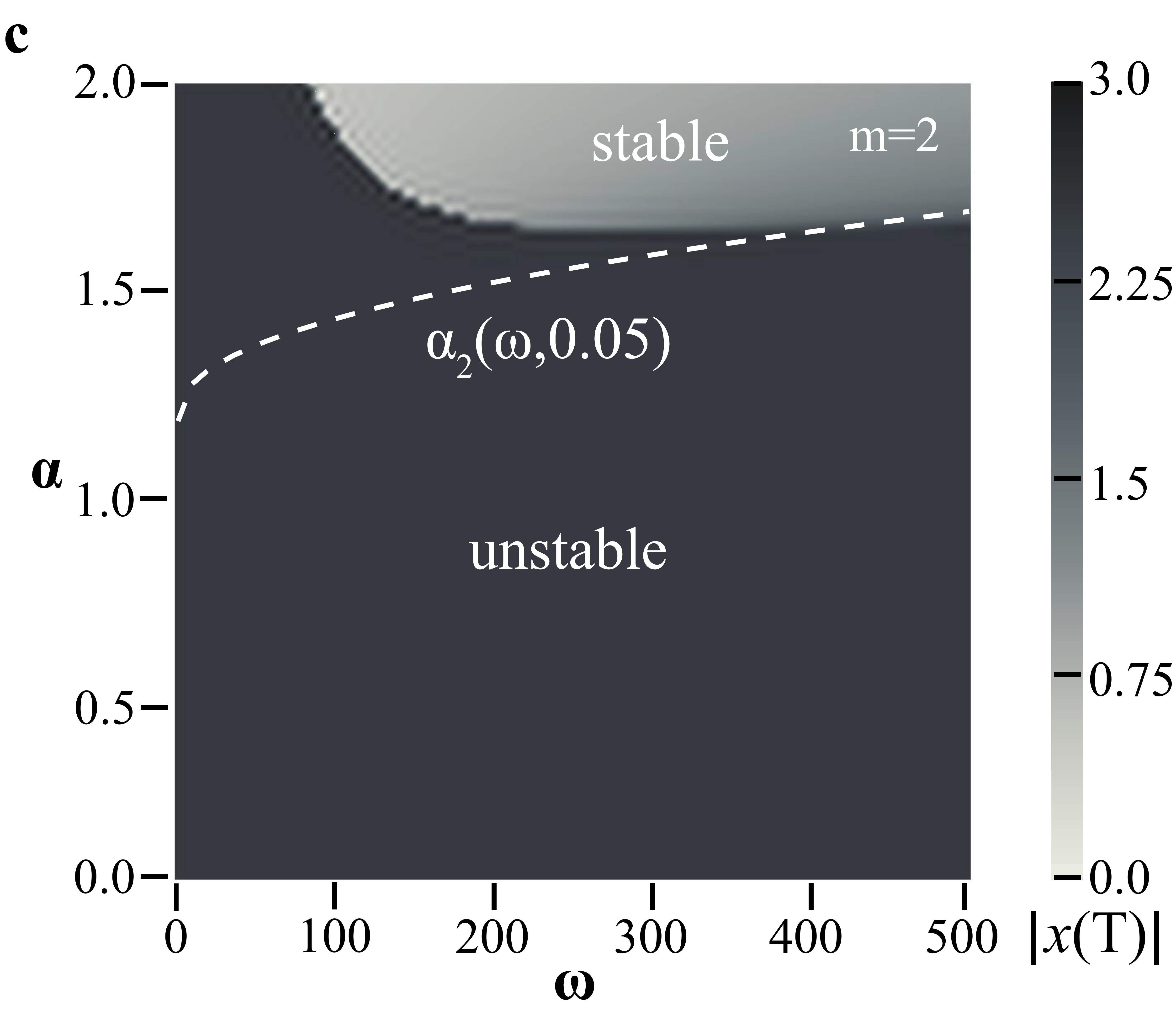}
\includegraphics[width=.263\textwidth]{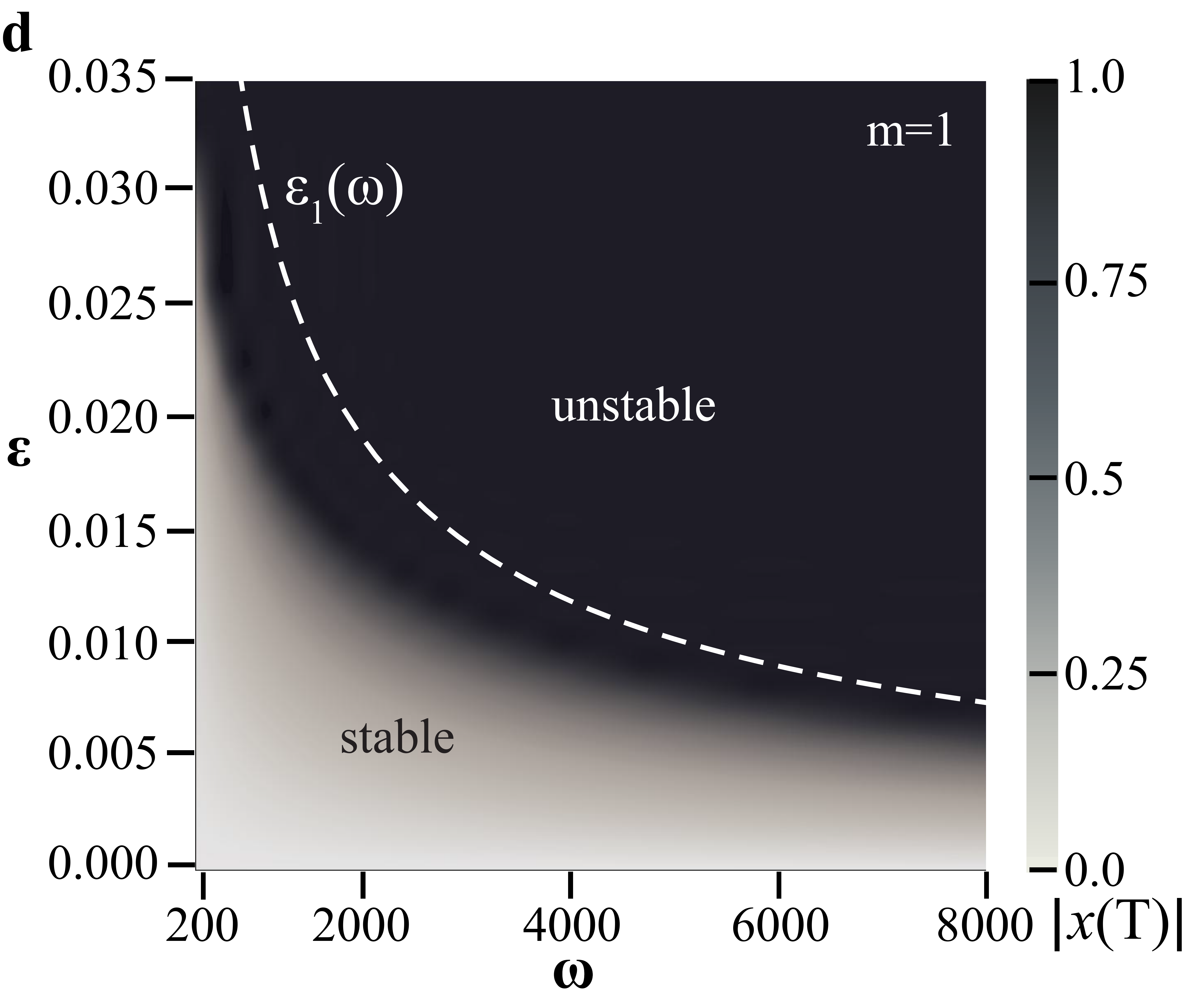}
\includegraphics[width=.225\textwidth]{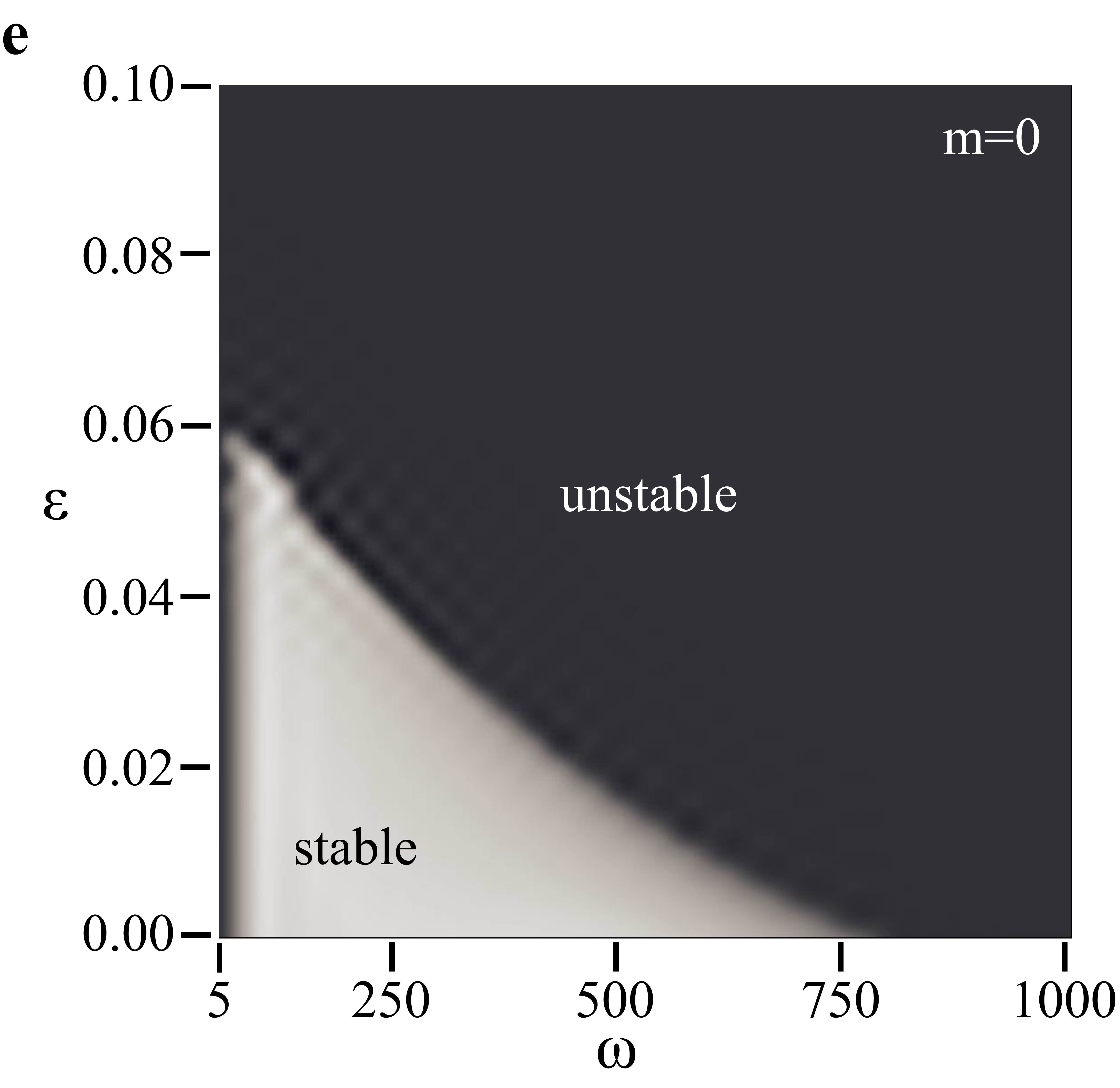}
\includegraphics[width=.225\textwidth]{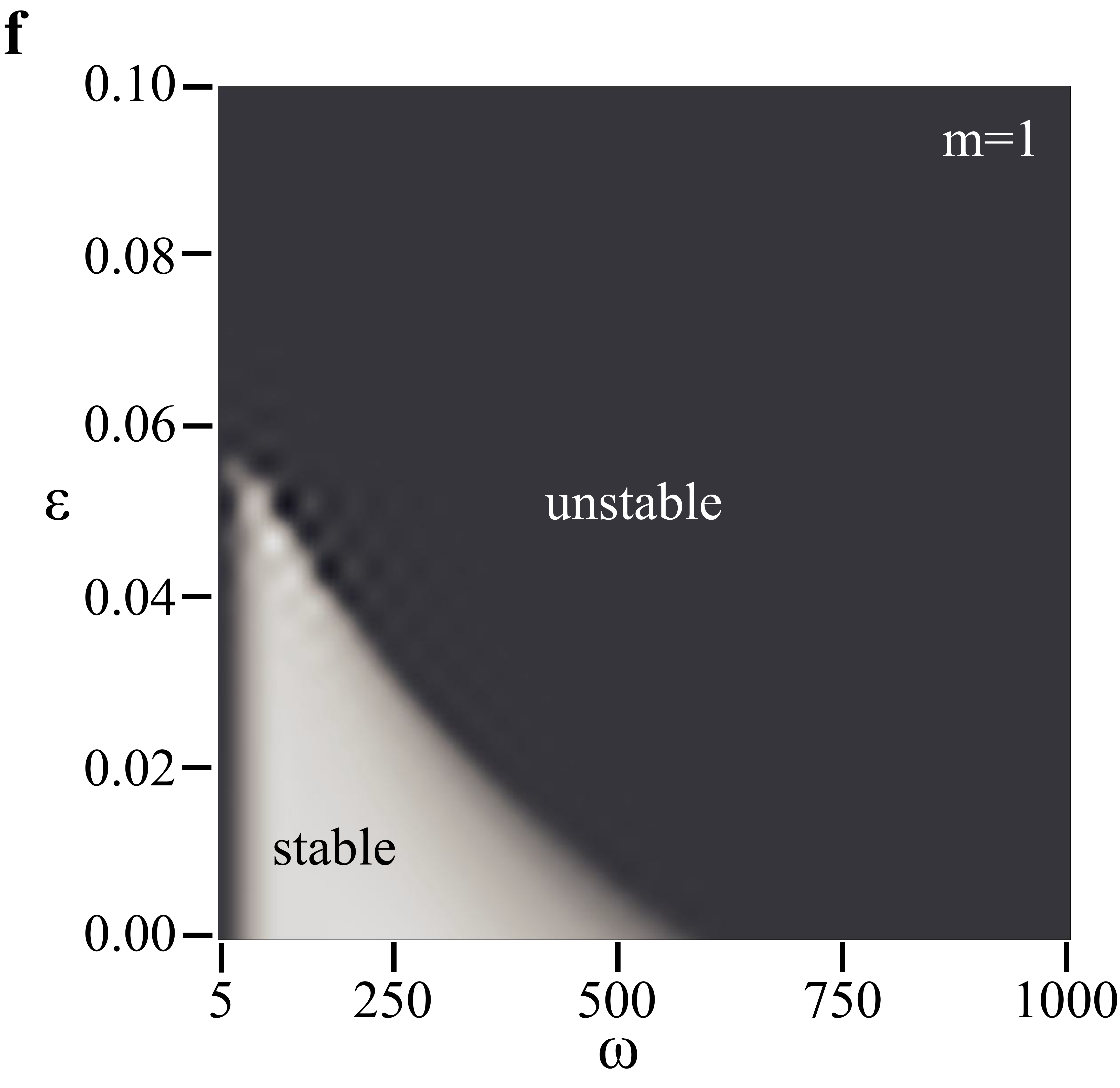}
\includegraphics[width=.258\textwidth]{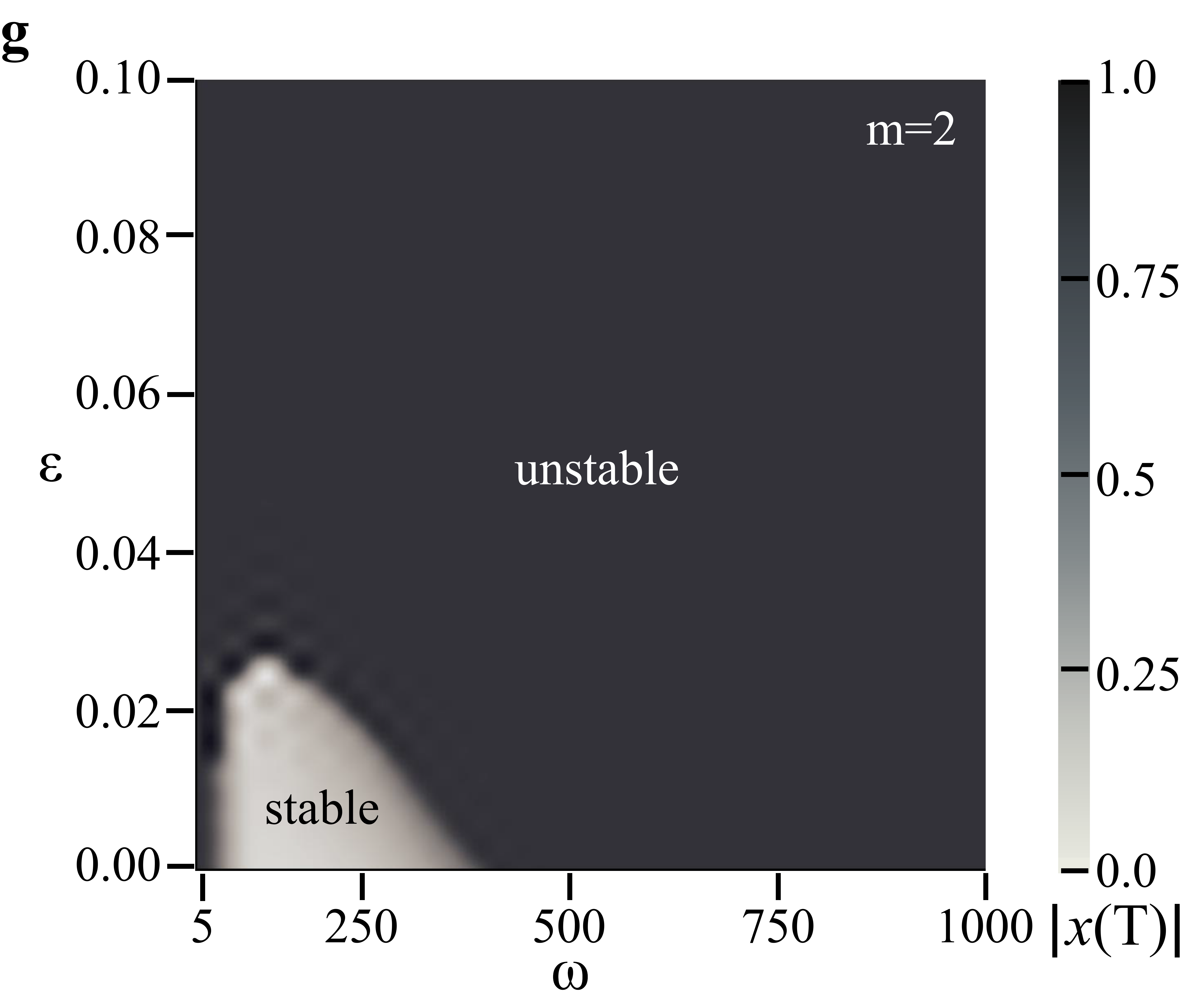}
\includegraphics[width=.263\textwidth]{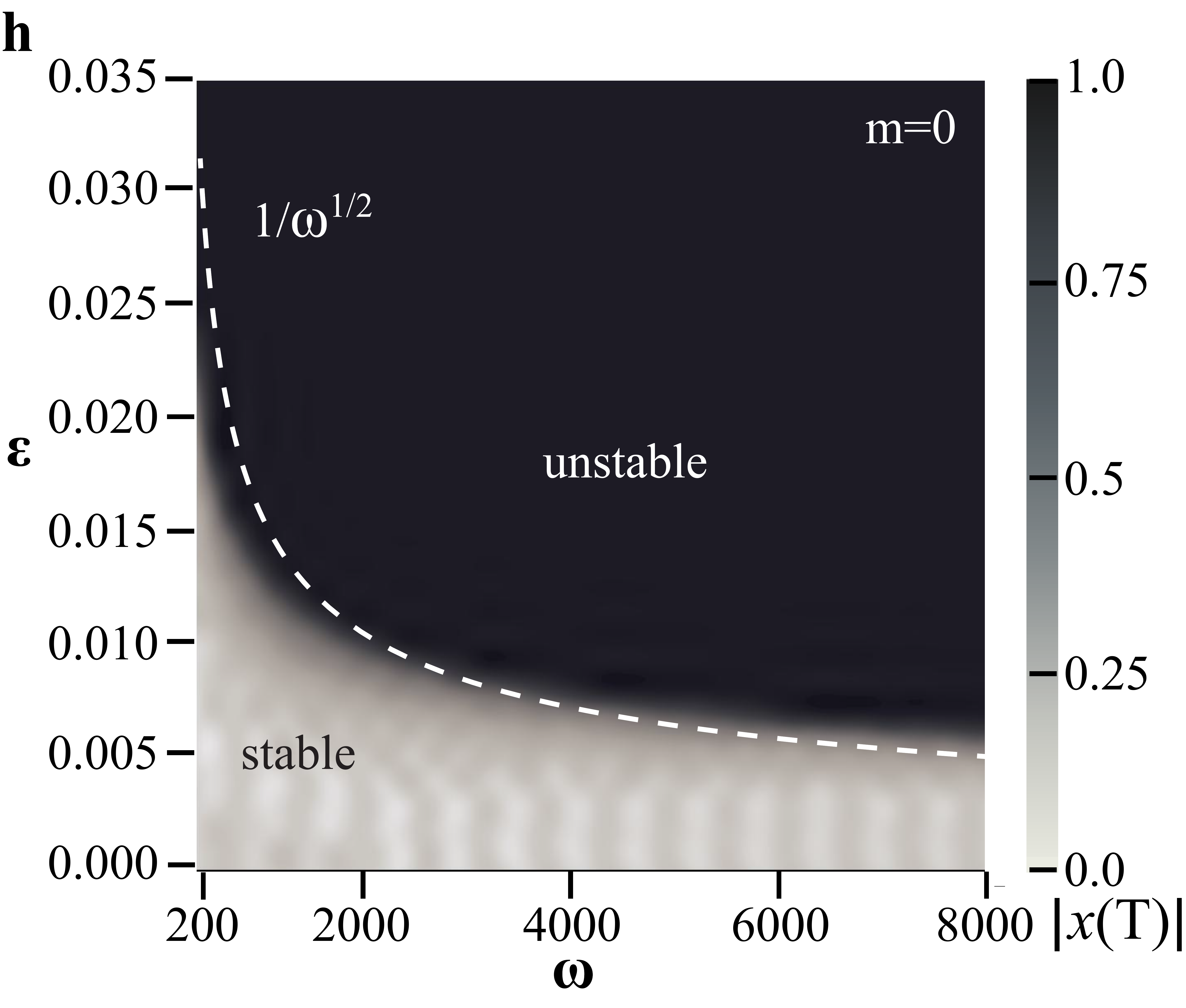}
\caption{In (a)-(c), the region of stability, as a function of $\omega$ and $\alpha$ is shown for several choices of controller, $m=0,1,2$, and $\epsilon=0.5$ for system (\ref{uu}), (\ref{uuc}), relative to the analytic prediction $\alpha_m(\omega,\epsilon,x^\star)$ based on (\ref{alphwx}). In (d) and (h), stability as a function of $\epsilon$ and $\omega$ is shown for $m=0,1$ for system (\ref{evenpow}). In (e)-(g), stability as a function of $\epsilon$ and $\omega$ is shown for $m=0,1,2$ for system (\ref{nonlfinal}).}
\label{fig:epsu135}
\end{figure*}

Integrating (\ref{gensys}) by parts, we notice that as $\omega \rightarrow \infty$ the highest power terms of (\ref{gensys}) dominate the dynamics, 
keeping only the highest order odd and even power terms, and averaging the oscillatory functions we produce the approximation (\ref{approx}), which leads us to make the following conjecture:
\begin{conjecture}\label{conj} Consider systems (\ref{gensys}), (\ref{u135u}) and:
\begin{flalign} 
	\dot{\bar{x}} &= f(\bar{x},t) + \epsilon g_{2n_e}(\bar{x},t)\left ( \alpha \omega \right )^{\frac{2n_e}{2m+1}}B_{n_e} \nonumber \\
	&- k\alpha^{\frac{2n_o+1}{2m+1}}\omega^{\left ( \frac{2n_o+1}{2m+1}-1 \right )} A_{n_0}  \frac{g_{n_o}(\bar{x},t)g^T_{n_o}(\bar{x},t)}{2^{4n_o+1}} \left (\frac{\partial V}{\partial \bar{x}}\right )^T, \nonumber \\
	& A_{n_0} = \sum_{l=0}^{n_o}\binom{2n_o+1}{l}^2, \quad B_{n_e} = \frac{1}{2^{2n_e}}\binom{2n_e}{n_e}. \label{approx}
\end{flalign} 
For any $\delta>0$, any compact set $K \subset \mathbb{R}^n$, and any $t_0, T \in \mathbb{R}_{\geq 0}$ there exists $\epsilon^\star(\delta,K,T)>0$ such that for all $\left | \epsilon \right | < \epsilon^\star$, there exists $\omega^\star$ such that for each $\omega>\omega^\star$, the trajectories $x(t)$ of (\ref{gensys}), (\ref{u135u}) and $\bar{x}(t)$ of (\ref{approx}) satisfy
\begin{equation}
	\max_{t\in \left [ t_0, t_0 + T \right ]} \left \| x(t) - \bar{x}(t) \right \| < \delta.
\end{equation}
\end{conjecture}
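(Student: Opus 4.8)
The plan is to follow the proof of Theorem \ref{thm1}: substitute the controller (\ref{u135u}) into (\ref{gensys}) and expand every power $\cos^{j}(\omega t + kV)$ with the trigonometric power identities into a sum of terms of the form $(\text{smooth function of }V)\times\cos(j'\omega t)$ and $(\text{smooth function of }V)\times\sin(j'\omega t)$. Each such term factors as a gain depending on $(x,t)$ only through $V$ times a purely time-dependent fast oscillation $h_{\cdot,\omega}(t)$, which is exactly the closed-loop form required by Theorem \ref{thm:Sch2}. I would then sort the terms by the parity of the originating power of $u$ and by the power of $\alpha\omega$ carried by their amplitude, noting that the $u^{j}$ contribution carries amplitude $(\alpha\omega)^{j/(2(2m+1))}$, so that the top odd power $2n_o+1$ and the top even power $2n_e$ dominate their respective families.

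I would then treat the two families separately. For the odd family each oscillation has zero mean, so a single term contributes nothing at first order; integrating by parts gives $H_{\cdot,\omega}=\int h_{\cdot,\omega}\sim h_{\cdot,\omega}/\omega$, and the weak limits of the products $h_{i,\omega}H_{j,\omega}$ coming from the top power are $\omega$-independent multiples of $\alpha^{(2n_o+1)/(2m+1)}\omega^{(2n_o+1)/(2m+1)-1}$; feeding these through the same $\partial b_i/\partial x\,\, b_j$ bookkeeping as in Theorem \ref{thm1} reproduces the gradient term of (\ref{approx}), while the contributions of all lower odd powers are either suppressed by negative powers of $\alpha\omega$ or vanish outright by the weak limits and are collected into a residual. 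For the even family the decisive feature is that $\cos^{2i}(\cdot)$ has nonzero mean $\binom{2i}{i}/2^{2i}$; this DC part is not an admissible $h_{\cdot,\omega}$ for Theorem \ref{thm:Sch2}, since its primitive does not vanish, so I would fold it directly into the drift, where the top even power yields exactly $\epsilon\, g_{2n_e}$ times the mean $B_{n_e}$ and the amplitude of $u^{2n_e}$, i.e.\ the even drift term of (\ref{approx}); the oscillatory parts of the even powers average away through the same weak limits, and the lower even powers are relatively negligible. Assembling the surviving gradient term, the surviving even drift, and the oscillatory remainder produces (\ref{approx}) plus a residual field that I must show is averaged out.

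The main obstacle, and the reason the statement is only conjectured, is that unlike in Theorem \ref{thm1} the drift of the limiting system (\ref{approx}) is \emph{not} independent of $\omega$: for fixed $\epsilon$ the even drift term carries a coefficient $\epsilon\,c(\omega)$, with $c(\omega)$ the amplitude of $u^{2n_e}$, which grows without bound as $\omega\to\infty$. Consequently the trajectory-closeness estimate behind Theorem \ref{thm:Sch2} and \cite{ref-mor-aeyel-00}, which compares $x(t)$ and $\bar x(t)$ through a Gronwall inequality, inherits a Lipschitz constant $L_\omega\sim \epsilon\, c(\omega)\,\|g_{2n_e}\|_{\mathrm{Lip}}$ of the leading vector field that itself grows with $\omega$; the even drift appears in both $\dot x$ and $\dot{\bar x}$, but its difference contributes $\epsilon\, c(\omega)\,\|g_{2n_e}\|_{\mathrm{Lip}}\,\|x-\bar x\|$ rather than cancelling, so the naive bound on $\|x(t)-\bar x(t)\|$ over $[t_0,t_0+T]$ is amplified by $e^{L_\omega T}$ and does not close. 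The crux of any proof is therefore to beat this amplification. I would try to show that the oscillatory residual and the dropped lower-order terms, which enter only as forcing, decay in $\omega$ fast enough to dominate the factor $e^{L_\omega T}$; failing that, I would rescale time by the dominant power of $\omega$ so that the leading dynamics become $\omega$-independent on the rescaled horizon, or impose extra structure such as $g_{2n_e}$ slowly varying in $x$ so that $L_\omega$ stays bounded. Because the quantifier order fixes $\epsilon>0$ before sending $\omega\to\infty$, this unbounded growth of the averaged drift must be controlled for each fixed $\epsilon$, and I expect that either such an additional hypothesis or a substantially refined, non-Gronwall comparison will be needed to complete the argument.
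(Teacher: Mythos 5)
The statement you were asked to prove is Conjecture \ref{conj}: the paper itself offers no proof, only a heuristic derivation (substitute (\ref{u135u}) into (\ref{gensys}), keep the dominant odd and even powers, use the nonzero mean $B_{n_e}$ of $\cos^{2n_e}$ for the even drift and the weak limits of the oscillatory products for the odd gradient term) together with numerical evidence. Your derivation of (\ref{approx}) reproduces exactly that heuristic, and the obstacle you isolate---that for fixed $\epsilon$ the averaged drift carries a coefficient growing without bound in $\omega$, so the Gronwall/Moreau--Aeyels comparison underlying Theorem \ref{thm:Sch2} and \cite{ref-mor-aeyel-00} does not close---is precisely the failure mode the paper itself acknowledges in Section \ref{sec:even}, where it notes the divergent, $\omega$-dependent weak limits and the positive semi-definite terms that grow with $\omega$. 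Your account is therefore consistent with, and somewhat more explicit than, the paper's own treatment: neither you nor the authors prove the statement, and your conclusion that additional hypotheses or a refined non-Gronwall comparison would be needed is a fair characterization of why it is stated as a conjecture rather than a theorem.
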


\begin{remark}
When $n_o = m$ and there are no even power terms, (\ref{approx}) simplifies to (\ref{nonafave}).
\end{remark}

To test Conjecture \ref{conj} we study the system
\begin{eqnarray}
	\dot{x}  &=& x + 0.1\left ( u + u^3 + u^5 \right) +  \epsilon \left ( u^2 + u^4 \right ), \label{uu} \\
	u &=& u_m = \left(\alpha\omega\right)^{\frac{1}{2(2m+1)}}\cos \left ( \omega t + k x^2 \right ). \label{uuc}
\end{eqnarray}
According to the conjecture, the closed loop dynamics should, for large $\omega$, approximate
\begin{equation}
	\dot{\bar{x}}  =  \bar{x} -\frac{2k}{100}\frac{1}{2^9}A_2\alpha^{\frac{5}{2m+1}}\omega^{\left ( \frac{5}{2m+1}-1 \right )}\bar{x} +  \epsilon B_{2} \left ( \alpha\omega \right )^{\frac{4}{2(2m+1)}}. \nonumber
\end{equation}
Therefore, the trajectory of the system should converge to the equilibrium point $x^\star$ satisfying
\begin{equation}
	 \left (\frac{2k}{100}\frac{1}{2^9}A_2\alpha^{\frac{5}{2m+1}}\omega^{\left ( \frac{5}{2m+1}-1 \right )}-1\right)x^\star = \epsilon B_{2} \left ( \alpha\omega \right )^{\frac{4}{2(2m+1)}}. \label{alphwx}
\end{equation}
Thus, for each $\epsilon>0$ and each controller power $2m+1$, we can find $\alpha_m(\omega,\epsilon,x^\star)$ which solves (\ref{alphwx}), such that for all $\alpha>\alpha_m(\omega,\epsilon,x^\star)$ the system should converge to $|x|\leq |x^\star|$. 

We confirm the estimate (\ref{alphwx}) by simulating system (\ref{uu}), (\ref{uuc}) with $k=100$, $m=1,3,5$, $\epsilon=0.05$, $\alpha \in [0.1, 2]$, $\omega \in [5, 200]$, $x(0)=1$. We let the system evolve for $T=5$ seconds and then record $\left | x(T) \right |$ (with a cutoff at 3), in order to determine whether the trajectory is converging towards the origin. We plot the results in Figure \ref{fig:epsu135} compared to the analytically predicted boundary of stability (\ref{alphwx}). For large $\left ( \alpha \omega \right )^{\frac{1}{2m}}$, the prediction is accurate.

When the highest power of the control nonlinearity is even, we have little chance of controlling the system, except in a very limited range of values of $\epsilon$ and $\omega$, because the positive semi-definite destabilizing terms dominate the dynamics and grow with $\omega$. We consider the system
\begin{equation}
	\dot{x} = 0.1u + 0.1u^3 + \epsilon u^4, \label{evenpow}
\end{equation}
with controller $u_m=\left (\alpha \omega \right )^{\frac{1}{2(2m+1)}}\cos \left ( \omega t + kx^2 \right )$, $m=0,1$. For $m=1$, Conjecture \ref{conj} implies the following estimate for a bound on $\epsilon$ for the stability of the average system:
\begin{equation}
	\epsilon < \epsilon_1(\omega) = \left ( 2k\left ( 0.1^2 \right)A_1\alpha-1 \right )/\left (\left(\alpha\omega\right)^{\frac{2}{3}}B_2 \right ).
\end{equation}
In the case $m=0$ the averaging estimates completely break down and we simply compare the dominant stabilizing $(\alpha\omega)^{\frac{3}{2}}$ and destabilizing $(\alpha\omega)^{\frac{4}{2}}$ powers and estimate a bound on $\epsilon$ of the form $\epsilon < \frac{1}{\sqrt{\alpha \omega}}$. Analytic predictions and simulation results are shown in Figure \ref{fig:epsu135} for $k=100$, $\alpha=10$.

%\begin{figure}[!t]
%\centering
%\includegraphics[width=.37\textwidth]{u134w16plotBW2_smaller.pdf}
%\includegraphics[width=.37\textwidth]{u134w12plotBW2_smaller.pdf}
%\caption{{\bf a:} Region of stability (in terms of $\epsilon$ and $\omega$) when assuming $h(u)$ to be of first order, utilizing a controller with amplitude $\left ( \alpha \omega \right )^{\frac{1}{2}}$. {\bf b:} Region of stability (in terms of $\epsilon$ and $\omega$) when assuming $h(u)$ to be of third order and utilizing a controller with amplitude $\left ( \alpha \omega \right )^{\frac{1}{6}}$.}
%\label{fig:aw}
%\end{figure}

Finally, we consider the nonlinearity from Example \ref{examp1} with even power nonlinearities, $h_\epsilon(u) = h(u) + \epsilon\left(u^2+u^4\right)$. We consider the system
\begin{equation}
	\dot{x} = x + h_\epsilon(u_m), \quad u_m = \left ( \alpha \omega \right )^{\frac{1}{2(2m+1)}}\cos\left ( \omega t + kx^2 \right), \label{nonlfinal}
\end{equation}
with various control options $m=0,1,2$ and $x(0) = 1$. The numerically calculated regions of stability are shown in Figure \ref{fig:epsu135} for $k=100$ and $\alpha=5$. The system remains stable for a small range of $\epsilon \neq 0$, and even when $\epsilon=0$, as $\omega$ is increased the system loses stability because, as $\sqrt{\alpha\omega}$ grows, $u$ leaves any fixed compact set $K_u$ and high power terms dominate.
%\begin{figure}[!t]
%\centering
%\includegraphics[width=.35\textwidth]{pu_hu_2.pdf}
%\caption{The nonlinearity $h_\epsilon(u)$ for $|u|<2.5$ for various values of $\epsilon$.}
%\label{fig:nonaf2}
%\end{figure}

These numerical studies show that for large $\omega$ the system is robust to the degree of odd dominating nonlinearities. In the presence of even nonlinearity, as shown in Figure \ref{fig:epsu135}(e-g), $\omega$ must be big enough, for the averaging results to approximately hold, but once $\omega$ exceeds a certain threshold it becomes destabilizing, therefore a proper operating range, probably by trial and error, must be found within the region of stability. Furthermore, if the controller is based on an underestimate of the degree of the nonlinearity and has a relatively large amplitude, 
the dithering terms will drive up the magnitude of the system's steady state oscillations (Figure \ref{fig:u135end}).

\begin{figure}[!t]
\centering
\includegraphics[width=.35\textwidth]{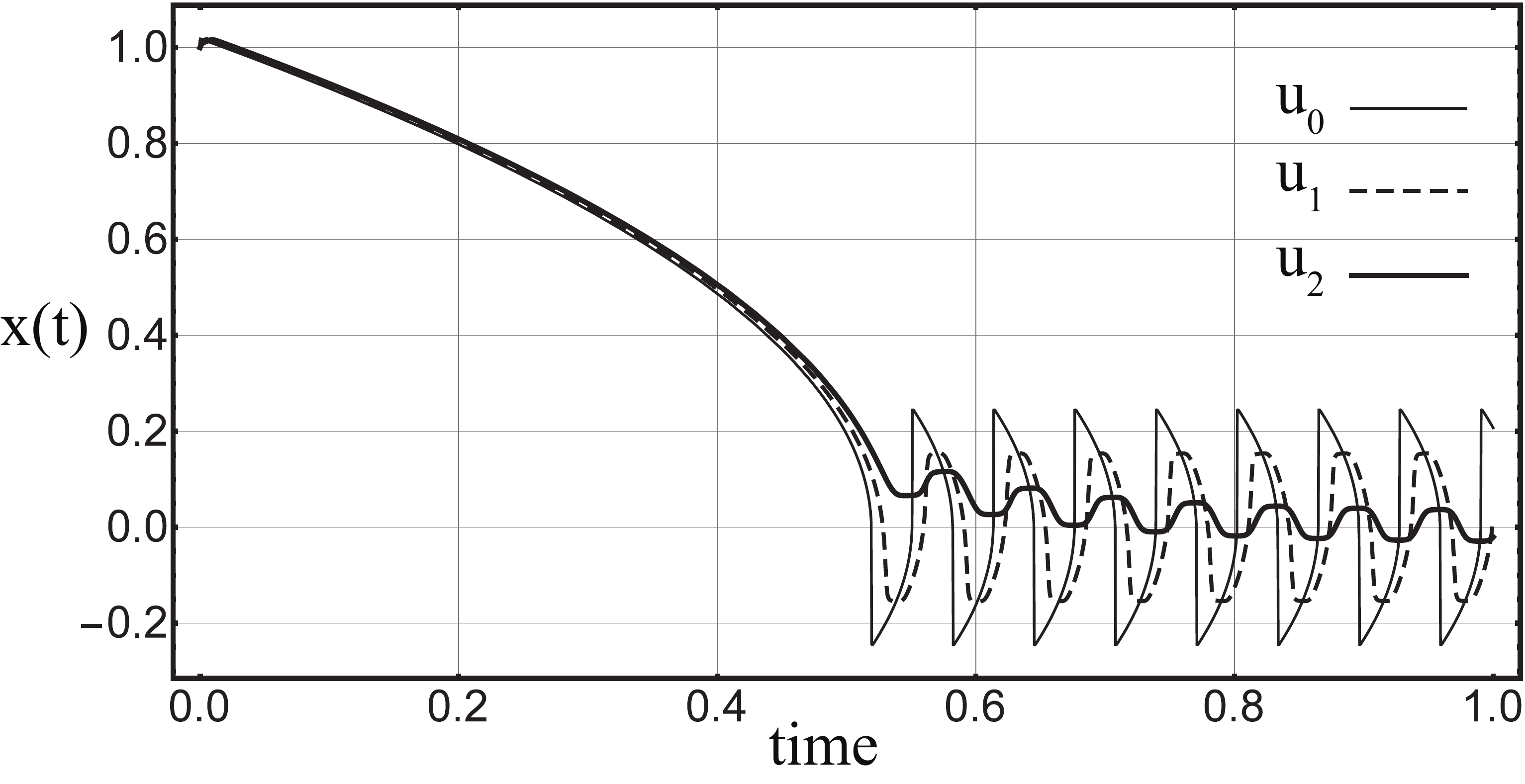}
\caption{A simulation of system (\ref{uu}), (\ref{uuc}), comparing the use of controllers with $m=0,1,2$.}
\label{fig:u135end}
\end{figure}

%\begin{figure*}[!t]
%\centering
%\includegraphics[width=.31\textwidth]{e_w_12b_bw2_smaller.pdf}
%\includegraphics[width=.31\textwidth]{e_w_16b_bw2_smaller.pdf}
%\includegraphics[width=.355\textwidth]{e_w_110b_L_bw2_smaller.pdf}
%\caption{{\bf a:} Region of stability (in terms of $\epsilon$ and $\omega$) when assuming $h(u)$ to be of first order, utilizing a controller with amplitude $\left ( \alpha \omega \right )^{\frac{1}{2}}$. {\bf b:} Region of stability (in terms of $\epsilon$ and $\omega$) when assuming $h(u)$ to be of third order, utilizing a controller with amplitude $\left ( \alpha \omega \right )^{\frac{1}{6}}$. {\bf c:} Region of stability (in terms of $\epsilon$ and $\omega$) when assuming $h(u)$ to be of fifth order and utilizing a controller with amplitude $\left ( \alpha \omega \right )^{\frac{1}{10}}$.}
%\label{fig:ewb}
%\end{figure*}

%
%%
%%%
%%%%
%%%%%
%%%%%%
%%%%%%%
%%%%%%%%%%%%%%%%%%%%%%%%%%%%%%%%%%%%%%%%%%%%%%%%%%%%%%%%%%%%%%%%%%%%%%%%%%%%%%%%%%%%%%%%
%%%%%%%%%%%%%%%%%%%%%%%%%%%%%%%%%%%%%%%%%%%%%%%%%%%%%%%%%%%%%%%%%%%%%%%%%%%%%%%%%%%%%%%%

\section{Conclusions}\label{sec:con}

%%%%%%%%%%%%%%%%%%%%%%%%%%%%%%%%%%%%%%%%%%%%%%%%%%%%%%%%%%%%%%%%%%%%%%%%%%%%%%%%%%%%%%%%
%%%%%%%%%%%%%%%%%%%%%%%%%%%%%%%%%%%%%%%%%%%%%%%%%%%%%%%%%%%%%%%%%%%%%%%%%%%%%%%%%%%%%%%%
%%%%%%%
%%%%%%
%%%%%
%%%%
%%%
%%
%

This work extends the application of ESC to systems not affine in control by approximating the non-linearities with odd functions and designing a controller for the approximate system. 
If the control input function is odd, then we provide analytic average results guaranteeing the satiability of the system. 
For the case when the input function is not odd with respect to $u$, we conjecture a sufficient condition for our controller to stabilize the system. 
Our result produces an average system nearly identical to that in \cite{ref-Sch-NewES} where the control entered the dynamics linearly as:
$\dot{x} = f(x,t) +  \sqrt{K_g}g_m(x,t)u$.
%
%%
%%%
%%%%
%%%%%
%%%%%%
%%%%%%%
%%%%%%%%%%%%%%%%%%%%%%%%%%%%%%%%%%%%%%%%%%%%%%%%%%%%%%%%%%%%%%%%%%%%%%%%%%%%%%%%%%%%%%%%
%%%%%%%%%%%%%%%%%%%%%%%%%%%%%%%%%%%%%%%%%%%%%%%%%%%%%%%%%%%%%%%%%%%%%%%%%%%%%%%%%%%%%%%%

%% Authors are advised to submit their bibtex database files. They are
%% requested to list a bibtex style file in the manuscript if they do
%% not want to use elsarticle-harv.bst.

%% References without bibTeX database:

% \begin{thebibliography}{00}

%% \bibitem must have one of the following forms:
%%   \bibitem[Jones et al.(1990)]{key}...
%%   \bibitem[Jones et al.(1990)Jones, Baker, and Williams]{key}...
%%   \bibitem[Jones et al., 1990]{key}...
%%   \bibitem[\protect\citeauthoryear{Jones, Baker, and Williams}{Jones
%%       et al.}{1990}]{key}...
%%   \bibitem[\protect\citeauthoryear{Jones et al.}{1990}]{key}...
%%   \bibitem[\protect\astroncite{Jones et al.}{1990}]{key}...
%%   \bibitem[\protect\citename{Jones et al., }1990]{key}...
%%   \harvarditem[Jones et al.]{Jones, Baker, and Williams}{1990}{key}...
%%

% \bibitem[ ()]{}

% \end{thebibliography}

\end{document}